\newcommand{\8}{\infty}
\newcommand{\Z}{\mathbb{Z}}
\newcommand{\R}{\mathbb{R}}
\newcommand{\C}{\mathbb{C}}
\newcommand{\N}{\mathbb{N}}
\newcommand{\T}{\mathbb{T}}
\newcommand{\D}{\mathbb{D}}
\newcommand{\dil}{\mathrm{dil}}
\newcommand{\spa}{\mathrm{span}}
\newcommand{\Ker}{\mathrm{Ker~}}
\newcommand{\conv}{\mathrm{conv}}
\newcommand{\Co}{\mathcal{C}}
\newcommand{\Lo}{\mathcal{L}}
\newcounter{erz}[section] \numberwithin{erz}{section}
\newtheorem{theorem}[erz]{Theorem}
\newtheorem{lemma}[erz]{Lemma}
\newtheorem{proposition}[erz]{Proposition}
\newtheorem{corollary}[erz]{Corollary}
\theoremstyle{remark}
\newtheorem{remark}[erz]{Remark}
\newtheorem{example}[erz]{Example}
\begin{document}
\title{Which multiplication operators are surjective isometries?}
\author{Eugene Bilokopytov\footnote{Email address bilokopi@myumanitoba.ca, erz888@gmail.com.}}
\maketitle

\begin{abstract}
Let $\mathbf{F}$ be a Banach space of continuous functions over a connected locally compact space $X$. We present several sufficient conditions on $\mathbf{F}$ guaranteeing that the only multiplication operators on $\mathbf{F}$ that are surjective isometries are scalar multiples of the identity. The conditions are given via the properties of the inclusion operator from $\mathbf{F}$ into $\Co\left(X\right)$, as well as in terms of geometry of $\mathbf{F}$. An important tool in our investigation is the notion of Birkhoff Orthogonality.

\emph{Keywords:} Function Spaces, Multiplication Operators, Surjective Isometries, Birkhoff Orthogonality, Nearly Strictly Convex Spaces;

MSC2010 46B20, 46E15, 47B38
\end{abstract}

\section{Introduction}

Normed spaces of functions are ubiquitous in mathematics, especially in analysis. These spaces can be of a various nature and exhibit different types of behavior, and in this work we discuss some questions related to these spaces from a general, axiomatic viewpoint. The class of linear operators that capture the very nature of of the spaces of functions is the class of weighted composition operators (WCO). Indeed, the operations of multiplication and composition can be performed on any collection of functions, while there are several Banach-Stone-type theorems which show that the WCO's are the only operators that preserve various kinds of structure (see e.g. \cite{fj} and \cite{gj} for more details).

In this article we continue our investigation (see \cite{erz}) of the general framework which allows to consider any Banach space that consists of continuous (scalar-valued) functions, such that the point evaluations are continuous linear functionals, and of WCO's on these spaces.\medskip

First, let us define precisely what we mean by a normed space of continuous functions. Let $X$ be a topological space (a \emph{phase space}) and let $\Co\left(X\right)$ denote the space of all continuous complex-valued functions over $X$ endowed with the compact-open topology. A \emph{normed space of continuous functions} (NSCF) over $X$ is a linear subspace $\mathbf{F}\subset\Co\left(X\right)$ equipped with a norm that induces a topology, which is stronger than the compact-open topology, i.e. the inclusion operator $J_{\mathbf{F}}:\mathbf{F}\to\Co\left(X\right)$ is continuous, or equivalently the unit ball $B_{\mathbf{F}}$ is bounded in $\Co\left(X\right)$. If $\mathbf{F}$ is a linear subspace of $\Co\left(X\right)$, then the \emph{point evaluation} at $x\in X$ on $\mathbf{F}$ is the linear functional $x_{\mathbf{F}}:\mathbf{F}\to\C$, defined by $x_{\mathbf{F}}\left(f\right)=f\left(x\right)$. If $\mathbf{F}$ is a NSCF, then all point evaluations are bounded on $\mathbf{F}$. Conversely, if $\mathbf{F}\subset\Co\left(X\right)$ is equipped with a complete norm such that $x_{\mathbf{F}}\in \mathbf{F}^{*}$, for every $x\in X$, then $\mathbf{F}$ is a NSCF. We will call a NSCF $\mathbf{F}$ over $X$ \emph{(weakly) compactly embedded} if $J_{\mathbf{F}}$ is a (weakly) compact operator, or equivalently, if $B_{\mathbf{F}}$ is (weakly) relatively compact in $\Co\left(X\right)$.\medskip

Let $X$ and $Y$ be topological spaces, and let $\Phi:Y\to X$ and $\omega:Y\to\C $ (not necessarily continuous). A \emph{weighted composition operator} (WCO) with \emph{composition symbol} $\Phi$ and \emph{multiplicative symbol} $\omega$ is a linear map $W_{\Phi,\omega}$ from the space of all complex-valued functions on $X$ into the analogous space over $Y$ defined by $$\left[W_{\Phi,\omega}f\right]\left(y\right)=\omega\left(y\right)f\left(\Phi\left(y\right)\right),$$ for $y\in Y$. Let $\mathbf{F}\subset\Co\left(X\right)$, $\mathbf{E}\subset\Co\left(Y\right)$ be linear subspaces. If $W_{\Phi,\omega}\mathbf{F}\subset\mathbf{E} $, then we say that $W_{\Phi,\omega}$ is a weighted composition operator from $\mathbf{F}$ into $\mathbf{E}$ (we use the same notation $W_{\Phi,\omega}$ for what is in fact $W_{\Phi,\omega}\left|_{\mathbf{F}}\right.$). In particular, if $X=Y$, we will denote $M_{\omega}=W_{Id_{X},\omega}$ \footnote{If $X$ is a set, then by $Id_{X}$ we denote the identity map on $X$.}, and call it the \emph{multiplication operator} (MO) with symbol (or \emph{weight}) $\omega$. If in this case $\mathbf{F}=\mathbf{E}$, then we will call $\omega$ a \emph{multiplier} of $\mathbf{F}$. If $\mathbf{F}$ and $\mathbf{E}$ are both complete NSCF's, then any WCO between these spaces is automatically continuous due to Closed Graph theorem. However, in concrete cases it can be very difficult to determine all WCO's between a given pair of NSCF's. In particular, it is difficult to determine all multipliers of a NSCF (see e.g. \cite{ms} and \cite{vukotic}, where the multiplier algebras of some specific families of NSCF's are described).\medskip

WCO's may be viewed as morphisms in the category of NSCF's. In the light of this fact it is important to be able to characterize WCO's with some specific properties. In this article we focus on one such property -- being a \emph{unitary}, i.e. a surjective isometry, or an isometric isomorphism. More specifically, we consider the following rigidity property of a NSCF $\mathbf{F}$ over $X$: if $\mathbf{E}$ is a NSCF over $Y$, $\omega,\upsilon:Y\to\C$ and $\Phi:Y\to X$ are such that both $W_{\Phi,\omega}$ and $W_{\Phi,\upsilon}$ are unitaries from $\mathbf{F}$ into $\mathbf{E}$, then there is $\lambda\in\C$, $\left|\lambda\right|=1$ with $\upsilon=\lambda\omega$. In particular, we are looking for conditions on $\mathbf{F}$ such that the only unitary MO's on $\mathbf{F}$ are the scalar multiples of the identity.

Some related problems were studied (see e.g. \cite{admv}, \cite{ac1}, \cite{ac2}, \cite{bn}, \cite{le1}, \cite{le2}, \cite{matache}, \cite{zhao}, \cite{nz1} and \cite{nz2}). Note that in these articles the class of operators under consideration is wider (e.g. unitary WCO's, or isometric MO's, as opposed to unitary MO's), but these operators are considered on the narrower classes of NSCF's.\medskip

Let us describe the contents of the article. In Section 2 we gather some elementary properties of NSCF's and WCO's. In particular, we characterize weakly compactly embedded NSCF's (Theorem \ref{barrell2}) and prove that a WCO between complete NSCF's with a surjective composition symbol is a linear homeomorphism if and only if its adjoint is bounded from below (part (iii) of Corollary \ref{winj2}). Section 3 is dedicated to the main problem of the article, and in particular it contains the main results (Theorem \ref{main} and Proposition \ref{mainw}), which give sufficient conditions for a NSCF to have the rigidity properties described above. In Section 4 we consider an interpretation of Theorem \ref{main} for abstract normed spaces, as opposed to NSCF's. Also, we study some properties of Birkhoff (-James) orthogonality which is an important tool in our investigation. Finally, we consider the class of nearly strictly convex normed spaces that includes strictly convex and finitely dimensional normed spaces, and arises naturally when studying NSCF's in the context of Birkhoff orthogonality.\medskip

\textbf{Some notations and conventions.} Let $\D$ (or $\overline{\D}$) be the open (or closed) unit disk on the plane $\C$, and let $\T=\partial \D$ be the unit circle. For a linear space $E$ let $E'$ be the algebraic dual of $E$, i.e. the linear space of all linear functionals on $E$.

\section{Preliminaries}

In this section we discus some basic properties of NSCF's and WCO's. Let us start with NSCF's. We will often need to put certain restrictions on the phase spaces of NSCF's. A Hausdorff topological space $X$ is called \emph{compactly generated}, or a \emph{k-space} whenever each set which has closed intersections with all compact subsets of $X$ is closed itself. It is easy to see that all first countable (including metrizable) and all locally compact Hausdorff spaces are compactly generated. Moreover, Arzela-Ascoli theorem describes the compact subsets of $\Co\left(X\right)$ in the event when $X$ is compactly generated, which further justifies the importance of this class of topological spaces. Details concerning the mentioned facts and some additional information about the compactly generated spaces can be found in \cite[3.3]{engelking}.\medskip

Let us characterize (weakly) compactly embedded NSCF's using the following variation of a classic result (see \cite{bartle}, \cite[VI.7, Theorem 1]{ds}, \cite[3.7, Theorem 5]{grot}, \cite{wada}).

\begin{theorem}\label{barrell} Let $\mathbf{F}$ be a NSCF over a Hausdorff space $X$. Then $\kappa_{\mathbf{F}}$ is a weak* continuous map from $X$ into $\mathbf{F}^{*}$. Moreover, the following equivalences hold:
\item[(i)] $\mathbf{F}$ is weakly compactly embedded if and only if $\kappa_{\mathbf{F}}$ is weakly continuous.
\item[(ii)] If $\kappa_{\mathbf{F}}$ is norm-continuous, then $\mathbf{F}$ is compactly embedded. The converses holds whenever $X$ is compactly generated.
\end{theorem}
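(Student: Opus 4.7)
Weak-$*$ continuity of $\kappa_{\mathbf{F}}$ is immediate: for each $f \in \mathbf{F}$, the map $x \mapsto \langle \kappa_{\mathbf{F}}(x), f\rangle = f(x)$ is continuous because $f \in \Co(X)$. The main tool for both equivalences in the ``moreover'' part will be the auxiliary operator $\hat{J}: \mathbf{F}^{**} \to \C^X$ defined by $\hat{J}(\varphi)(x) = \varphi(\kappa_{\mathbf{F}}(x))$; it extends $J_{\mathbf{F}}$ through the canonical embedding $\mathbf{F} \hookrightarrow \mathbf{F}^{**}$.

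For (i), the forward direction uses a Goldstine-type argument: given $\varphi \in B_{\mathbf{F}^{**}}$, approximate it weak-$*$ by a net $(f_\alpha) \subset B_{\mathbf{F}}$, so that $f_\alpha(x) = \langle \kappa_{\mathbf{F}}(x), f_\alpha\rangle \to \varphi(\kappa_{\mathbf{F}}(x))$ for every $x$; by weak relative compactness of $J_{\mathbf{F}}(B_{\mathbf{F}})$ in $\Co(X)$ and the fact that every point evaluation lies in $\Co(X)^*$, a subnet converges weakly in $\Co(X)$ to some $g$ with $g(x) = \varphi(\kappa_{\mathbf{F}}(x))$, which yields continuity of $\hat{J}(\varphi)$. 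Conversely, if $\kappa_{\mathbf{F}}$ is weakly continuous then $\hat{J}$ takes values in $\Co(X)$, and I would establish continuity of $\hat{J}: (\mathbf{F}^{**}, w^*) \to (\Co(X), w)$. Every $\mu \in \Co(X)^*$ factors through some $C(K)$ for a compact $K \subset X$ (a compactly supported measure), and the Pettis integral $\psi_\mu = \int_K \kappa_{\mathbf{F}}(x)\, d\mu(x) \in \mathbf{F}^*$ exists because $\kappa_{\mathbf{F}}(K)$ is weakly compact in the Banach space $\mathbf{F}^*$ and Krein's theorem makes its closed convex hull weakly compact. By definition of the Pettis integral, $\langle \mu, \hat{J}(\varphi)\rangle = \varphi(\psi_\mu)$, a weak-$*$ continuous functional of $\varphi$. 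Alaoglu then makes $\hat{J}(B_{\mathbf{F}^{**}})$ weakly compact in $\Co(X)$, and it contains $J_{\mathbf{F}}(B_{\mathbf{F}})$.

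For (ii), the estimate $|f(x) - f(y)| \le \|f\|\cdot\|\kappa_{\mathbf{F}}(x) - \kappa_{\mathbf{F}}(y)\|$ turns norm continuity of $\kappa_{\mathbf{F}}$ directly into equicontinuity of $B_{\mathbf{F}}$; combined with pointwise boundedness $|f(x)| \le \|\kappa_{\mathbf{F}}(x)\|$ and Arzela--Ascoli on each compact subset of $X$, this yields compactness of the embedding. For the converse, with $X$ a $k$-space, continuity of the norm-valued map $\kappa_{\mathbf{F}}$ can be tested on compact subsets by the defining property of $k$-spaces; on each compact $K$, relative compactness of $B_{\mathbf{F}}|_K$ in $C(K)$ gives equicontinuity, and the same estimate produces norm continuity of $\kappa_{\mathbf{F}}|_K$.

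The main obstacle I anticipate is the backward direction of (i): correctly identifying $\Co(X)^*$ as compactly supported measures factoring through the various $C(K)$, and verifying that the Pettis integral lives in $\mathbf{F}^*$ rather than merely in $\mathbf{F}^{**}$, which is precisely what converts weak-$*$ continuity of the auxiliary functionals $\varphi \mapsto \langle \mu, \hat{J}(\varphi)\rangle$ into the desired weak relative compactness.
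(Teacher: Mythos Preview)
The paper does not actually prove Theorem~\ref{barrell}; it is stated as ``a variation of a classic result'' with references to Bartle, Dunford--Schwartz, Grothendieck, and Wada, and no argument is supplied. So there is no paper proof to compare against.

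Your proposal is sound and follows the classical line of reasoning. The weak-$*$ continuity claim and both directions of (ii) are straightforward and correctly argued; the use of the $k$-space property to reduce norm-continuity of $\kappa_{\mathbf{F}}$ to compact subsets, where Arzel\`a--Ascoli provides equicontinuity, is exactly right. For (i), the forward direction via Goldstine and extraction of a weakly convergent subnet is clean. The backward direction is where the work lies, and your plan is correct: the key point is that $J_{K}^{*}\mu$ is the Gelfand integral of $\kappa_{\mathbf{F}}$ over $K$, which lands in the weak-$*$ closed convex hull of $\kappa_{\mathbf{F}}(K)$; Krein's theorem forces this hull to coincide with the weakly compact norm-closed convex hull, so $J_{K}^{*}$ is weakly compact, and Gantmacher then gives $J_{K}^{**}(\mathbf{F}^{**})\subset C(K)$, which unwinds to the Pettis identity $\varphi(J_{K}^{*}\mu)=\int_{K}\varphi(\kappa_{\mathbf{F}}(x))\,d\mu(x)$ you need. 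Your identification of $\Co(X)^{*}$ with compactly supported measures is valid for arbitrary Hausdorff $X$, since every continuous seminorm on $\Co(X)$ is dominated by some $p_{K}$. The obstacle you flag is real but surmountable along exactly the lines you indicate.
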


More generally, every linear map $T$ from a linear space $F$ into $\Co\left(X\right)$ generates a weak* continuous map $\kappa_{T}:X\to F'$ defined by  $\left<f,\kappa_{T}\left(x\right)\right>=\left[Tf\right]\left(x\right)$, for $x\in X$ and $f\in F$. In this case $\kappa_{T}\left(A\right)^{\bot}=\kappa_{T}\left(\overline{A}\right)^{\bot}$, for any $A\subset X$, and $\Ker T=\kappa_{T}\left(X\right)^{\bot}$.\medskip

\begin{remark}
Clearly, every compactly embedded NSCF's is weakly compactly embedded. On the other hand, it follows from the theorem above that any reflexive NSCF is also weakly compactly embedded.

If $X$ is a domain in $\C^{n}$, i.e. an open connected set, and $\mathbf{F}$ is a NSCF over $X$ that consists of holomorphic functions, then $\mathbf{F}$ is compactly embedded. Indeed, by Montel's theorem (see \cite[Theorem 1.4.31]{scheidemann}), $B_{\mathbf{F}}$ is relatively compact in $\Co\left(X\right)$, since it is a bounded set that consists of holomorphic functions.
\qed\end{remark}

For a NSCF $\mathbf{F}$ over $X$ let $\overline{B_{\mathbf{F}}}^{\Co\left(X\right)}$ be the closure of $B_{\mathbf{F}}$ in $\Co\left(X\right)$. Since $\overline{B_{\mathbf{F}}}^{\Co\left(X\right)}$ is bounded, closed, convex and balanced, we can generate a NSCF with the closed unit ball $\overline{B_{\mathbf{F}}}^{\Co\left(X\right)}$. Namely, define $\widehat{\mathbf{F}}=\left\{\alpha f\left|\alpha>0,~ f\in \overline{B_{\mathbf{F}}}^{\Co\left(X\right)}\right.\right\}$, which is a linear subspace of $\Co\left(X\right)$, and endow it with the norm being the Minkowski functional of $\overline{B_{\mathbf{F}}}^{\Co\left(X\right)}$. Since $\overline{B}_{\widehat{\mathbf{F}}}=\overline{B_{\mathbf{F}}}^{\Co\left(X\right)}$ is bounded in $\Co\left(X\right)$, it follows that $\widehat{\mathbf{F}}$ is a NSCF over $X$. It is clear that $\mathbf{F}$ is (weakly) compactly embedded if and only if $\widehat{\mathbf{F}}$ is (weakly) compactly embedded. It turns out, that the fact that $\mathbf{F}$ is weakly compactly embedded can be further characterized in terms of $\widehat{\mathbf{F}}$ and $\overline{B_{\mathbf{F}}}^{\Co\left(X\right)}$.

\begin{theorem}\label{barrell2}
Let $\mathbf{F}$ be a NSCF over a Hausdorff space $X$. Then the following are equivalent:
\item[(i)] $\mathbf{F}$ is weakly compactly embedded;
\item[(ii)] $\overline{B_{\mathbf{F}}}^{\Co\left(X\right)}$ is compact with respect to the pointwise topology on $\Co\left(X\right)$;
\item[(iii)] $\widehat{\mathbf{F}}=\left(\spa~\kappa_{\mathbf{F}}\left(X\right)\right)^{*}$ (as normed spaces) via the bilinear form induced by $\left<x_{\mathbf{F}},f\right>=f\left(x\right)$.
\end{theorem}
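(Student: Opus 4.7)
The central object in the argument is the natural linear map $\iota\colon \widehat{\mathbf{F}}\to G^{*}$, where $G:=\spa~\kappa_{\mathbf{F}}\left(X\right)$ is endowed with the norm inherited from $\mathbf{F}^{*}$ and $\iota\left(f\right)$ is determined by $\iota\left(f\right)\left(x_{\mathbf{F}}\right)=f\left(x\right)$. The content of (iii) is precisely that $\iota$ is a surjective isometry, so the plan is to verify this via the cycle $\left(i\right)\Rightarrow\left(iii\right)\Rightarrow\left(ii\right)\Rightarrow\left(i\right)$, leaning on Theorem \ref{barrell} (the weak continuity of $\kappa_{\mathbf{F}}$ under (i)) and on the classical Grothendieck-type characterization of weak compactness in $\Co\left(X\right)$ that already underlies that theorem.

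The easy arrow $\left(iii\right)\Rightarrow\left(ii\right)$ is a direct application of Banach--Alaoglu: the unit ball of $G^{*}$ is $\sigma\left(G^{*},G\right)$-compact, and once $G^{*}$ is identified with $\widehat{\mathbf{F}}$ this topology coincides with the topology of pointwise convergence on $X$, since $G$ is generated by point evaluations. The arrow $\left(ii\right)\Rightarrow\left(i\right)$ is the classical fact behind Theorem \ref{barrell}: the convex set $\overline{B_{\mathbf{F}}}^{\Co\left(X\right)}$ equals its weak closure in $\Co\left(X\right)$ by Hahn--Banach, and for bounded subsets of $\Co\left(X\right)$ pointwise compactness upgrades to weak compactness. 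The converse easy direction $\left(i\right)\Rightarrow\left(ii\right)$, needed below, is immediate since the weak topology on $\Co\left(X\right)$ is finer than the pointwise one.

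The substance of the proof is $\left(i\right)\Rightarrow\left(iii\right)$. That $\iota$ is well-defined and norm-decreasing follows by approximating any $f\in\overline{B_{\mathbf{F}}}^{\Co\left(X\right)}$ pointwise by a net in $B_{\mathbf{F}}$ and passing to the limit against a fixed $\varphi\in G$. For the reverse inequality I would invoke the bipolar theorem in the dual pairing $\langle\widehat{\mathbf{F}},G\rangle$: one checks $B_{\mathbf{F}}^{\circ}=B_{G}$, so the bipolar $B_{\mathbf{F}}^{\circ\circ}=\{f\in\widehat{\mathbf{F}}\colon\|\iota\left(f\right)\|_{G^{*}}\leq 1\}$ equals the $\sigma\left(\widehat{\mathbf{F}},G\right)$-closure of $B_{\mathbf{F}}$, i.e.\ its pointwise closure, which by the easy arrow $\left(i\right)\Rightarrow\left(ii\right)$ coincides with $\overline{B_{\mathbf{F}}}^{\Co\left(X\right)}=B_{\widehat{\mathbf{F}}}$. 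For surjectivity, given $\psi\in G^{*}$ I would extend it via Hahn--Banach to $\tilde{\psi}\in\mathbf{F}^{**}$ of equal norm and set $f\left(x\right):=\tilde{\psi}\left(x_{\mathbf{F}}\right)=\tilde{\psi}\circ\kappa_{\mathbf{F}}\left(x\right)$. Theorem \ref{barrell}(i) supplies the weak continuity of $\kappa_{\mathbf{F}}$, and $\tilde{\psi}$ is weakly continuous on $\mathbf{F}^{*}$, so $f\in\Co\left(X\right)$. Goldstine's theorem then gives a net $g_{\alpha}\in\|\psi\|\cdot B_{\mathbf{F}}$ converging to $\tilde{\psi}$ in $\sigma\left(\mathbf{F}^{**},\mathbf{F}^{*}\right)$; in particular $g_{\alpha}\to f$ pointwise on $X$, whence by $\left(ii\right)$ we have $f\in\|\psi\|\cdot\overline{B_{\mathbf{F}}}^{\Co\left(X\right)}$, so $\|f\|_{\widehat{\mathbf{F}}}\leq\|\psi\|_{G^{*}}$ and clearly $\iota\left(f\right)=\psi$.

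The delicate step, and the main obstacle, is this last surjectivity argument, which must produce simultaneously the continuity of the candidate $f$ and its membership in $\widehat{\mathbf{F}}$. Continuity is unavailable without the weak continuity of $\kappa_{\mathbf{F}}$ supplied by Theorem \ref{barrell}, while membership in $\widehat{\mathbf{F}}$ requires $\left(ii\right)$ in order to identify the pointwise closure of $B_{\mathbf{F}}$ with $\overline{B_{\mathbf{F}}}^{\Co\left(X\right)}$; the simultaneous use of both consequences of (i) is where the hypothesis is genuinely spent.
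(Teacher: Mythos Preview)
Your proof is correct and follows the same overall cycle as the paper: $(iii)\Rightarrow(ii)$ via Banach--Alaoglu, $(ii)\Leftrightarrow(i)$ via the Grothendieck-type criterion for bounded sets in $\Co\left(X\right)$, and $(i)\Rightarrow(iii)$ by passing through $\mathbf{F}^{**}$.

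The only difference worth noting is in the packaging of $(i)\Rightarrow(iii)$. The paper quotes the classical fact that for a weakly compact $J_{\mathbf{F}}$ the second adjoint satisfies $J_{\mathbf{F}}^{**}\overline{B}_{\mathbf{F}^{**}}=\overline{B_{\mathbf{F}}}^{\Co\left(X\right)}$, deduces that $J_{\mathbf{F}}^{**}$ is a quotient map from $\mathbf{F}^{**}$ onto $\widehat{\mathbf{F}}$, and then identifies $\widehat{\mathbf{F}}\simeq\mathbf{F}^{**}\slash\kappa_{\mathbf{F}}\left(X\right)^{\bot}\simeq G^{*}$. You instead unpack this by hand: the bipolar theorem in the pairing $\langle\widehat{\mathbf{F}},G\rangle$ gives the isometry, while Hahn--Banach plus Goldstine plus Theorem~\ref{barrell}(i) give surjectivity. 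Your extension $\tilde{\psi}\in\mathbf{F}^{**}$ and the function $f\left(x\right)=\tilde{\psi}\left(x_{\mathbf{F}}\right)$ are exactly a preimage and its image under $J_{\mathbf{F}}^{**}$, so the two arguments are the same mathematics; yours trades one citation (the Dunford--Schwartz/Grothendieck description of $J_{\mathbf{F}}^{**}$) for more elementary ingredients, at the cost of a slightly longer argument. Two trivial slips: $B_{\mathbf{F}}^{\circ}$ is the \emph{closed} ball $\overline{B}_{G}$, and $\overline{B_{\mathbf{F}}}^{\Co\left(X\right)}$ is $\overline{B}_{\widehat{\mathbf{F}}}$ rather than $B_{\widehat{\mathbf{F}}}$.
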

\begin{proof}
(iii)$\Rightarrow$(ii): If (iii) holds, then the pointwise topology on $\widehat{\mathbf{F}}$ coincides with the weak* topology. Hence, the unit ball $\overline{B_{\mathbf{F}}}^{\Co\left(X\right)}$ is pointwise compact due to Banach-Alaoglu theorem.

(ii)$\Leftrightarrow$(i): From the definition of a NSCF, $B_{\mathbf{F}}$ is bounded in $\Co\left(X\right)$. Hence, this set is weakly relatively compact if and only if it is relatively compact with respect to the pointwise topology on $\Co\left(X\right)$ (see \cite[4.3, Corollary 2]{floret}).

(i)$\Rightarrow$(iii): If $\mathbf{F}$ is weakly compactly embedded then $J_{\mathbf{F}}$ is weakly compact, and so $J_{\mathbf{F}}^{**}$ maps $\mathbf{F}^{**}$ into $\Co\left(X\right)$ with $J_{\mathbf{F}}^{**}\overline{B}_{\mathbf{F}^{**}}=\overline{B_{\mathbf{F}}}^{\Co\left(X\right)}=\overline{B}_{\widehat{\mathbf{F}}}$ (the proof of \cite[VI.4, Theorem 2]{ds} carries over to the case when the target space is locally convex, see also \cite[2.18, Theorem 13]{grot}). Consequently, $J_{\mathbf{F}}^{**}B_{\mathbf{F}^{**}}=B_{\widehat{\mathbf{F}}}$. Indeed, if $f\in B_{\widehat{\mathbf{F}}}$, there is $g\in \overline{B}_{\mathbf{F}^{**}}$ such that $J_{\mathbf{F}}^{**} g=\frac{f}{\|f\|_{\widehat{\mathbf{F}}}}$. Then $J_{\mathbf{F}}^{**} \left(\|f\|_{\widehat{\mathbf{F}}}g\right)=f$, and since $\|g\|\le 1$ and $\|f\|_{\widehat{\mathbf{F}}}<1$ it follows that $f\in J_{\mathbf{F}}^{**} B_{\mathbf{F}^{**}}$. On the other hand, as $\|J_{\mathbf{F}}^{**}\|_{\Lo\left(\mathbf{F}^{**}, \widehat{\mathbf{F}}\right)}\le 1$, it follows that $J_{\mathbf{F}}^{**}g\in B_{\widehat{\mathbf{F}}}$, for any $g\in B_{\mathbf{F}^{**}}$.

Hence, $J_{\mathbf{F}}^{**}$ is a quotient map from $\mathbf{F}^{**}$ onto $\widehat{\mathbf{F}}$ (see the proof of \cite[Lemma 2.2.4]{istr}), and so $\widehat{\mathbf{F}}\simeq \mathbf{F}^{**}\slash \Ker J_{\mathbf{F}}^{**}$. For $g\in \mathbf{F}^{**}$ we have that $g\in \Ker J_{\mathbf{F}}^{**}$ if and only if $\left[J_{\mathbf{F}}^{**}g\right]\left(x\right)=0$, for every $x\in X$. By definition, $\left[J_{\mathbf{F}}^{**}g\right]\left(x\right)=\left<g,x_{\mathbf{F}}\right>$, and so $\Ker J_{\mathbf{F}}^{**}=\kappa_{\mathbf{F}}\left(X\right)^{\bot}$ in $\mathbf{F}^{**}$. Finally, since $\mathbf{F}^{**}\slash \kappa_{\mathbf{F}}\left(X\right)^{\bot}$ is isometrically isomorphic to $\left(\spa~\kappa_{\mathbf{F}}\left(X\right)\right)^{*}$ (see the proof of \cite[Proposition 2.6]{fhhmz}), the result follows.
\end{proof}

\begin{corollary}\label{barrell3}
Let $\mathbf{F}$ be a NSCF over a Hausdorff space $X$. Then\linebreak $\mathbf{F}=\left(\spa~\kappa_{\mathbf{F}}\left(X\right)\right)^{*}$ (as normed spaces) if and only if $\mathbf{F}$ is weakly compactly embedded and $\overline{B}_{\mathbf{F}}$ is closed in $\Co\left(X\right)$.
\end{corollary}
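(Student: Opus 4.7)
The plan is to read the corollary as a combination of Theorem \ref{barrell2} with a simple characterization of when the canonical inclusion $\mathbf{F}\hookrightarrow\widehat{\mathbf{F}}$ is an isometric isomorphism. Since the $\mathbf{F}$-norm topology is stronger than the one induced from $\Co\left(X\right)$, one always has $\overline{B}_{\mathbf{F}}\subset\overline{B_{\mathbf{F}}}^{\Co\left(X\right)}=\overline{B}_{\widehat{\mathbf{F}}}$, so $\mathbf{F}$ sits in $\widehat{\mathbf{F}}$ as a norm-decreasing inclusion. The two normed spaces coincide precisely when the reverse inclusion $\overline{B}_{\widehat{\mathbf{F}}}\subset\overline{B}_{\mathbf{F}}$ also holds, which, given $B_{\mathbf{F}}\subset\overline{B}_{\mathbf{F}}$, is equivalent to $\overline{B}_{\mathbf{F}}$ being closed in $\Co\left(X\right)$. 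In this case both Minkowski functionals, and hence the underlying sets, coincide.

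Given this observation, the backward direction is immediate: weak compact embedding gives $\widehat{\mathbf{F}}=\left(\spa~\kappa_{\mathbf{F}}\left(X\right)\right)^{*}$ by Theorem \ref{barrell2}(i)$\Rightarrow$(iii), and the $\Co\left(X\right)$-closedness of $\overline{B}_{\mathbf{F}}$ yields $\mathbf{F}=\widehat{\mathbf{F}}$, whence $\mathbf{F}=\left(\spa~\kappa_{\mathbf{F}}\left(X\right)\right)^{*}$ as normed spaces under the same pairing $\left<x_{\mathbf{F}},f\right>=f\left(x\right)$.

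For the forward direction, assume $\mathbf{F}=\left(\spa~\kappa_{\mathbf{F}}\left(X\right)\right)^{*}$ via this pairing. Since $\spa~\kappa_{\mathbf{F}}\left(X\right)$ is spanned by point evaluations, the weak-$*$ topology on its dual corresponds, under the pairing, to the topology of pointwise convergence on $X$. Banach--Alaoglu then forces $\overline{B}_{\mathbf{F}}$ to be compact, and thus closed, in the pointwise topology on $\Co\left(X\right)$; since the compact-open topology is stronger than the pointwise one, $\overline{B}_{\mathbf{F}}$ is a fortiori closed in $\Co\left(X\right)$. By the preliminary observation $\mathbf{F}=\widehat{\mathbf{F}}$, so the hypothesis upgrades to $\widehat{\mathbf{F}}=\left(\spa~\kappa_{\mathbf{F}}\left(X\right)\right)^{*}$, and Theorem \ref{barrell2}(iii)$\Rightarrow$(i) delivers the remaining weak compact embedding.

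The argument amounts to careful bookkeeping of four topologies (the norms on $\mathbf{F}$ and $\widehat{\mathbf{F}}$, together with the pointwise and compact-open topologies on $\Co\left(X\right)$), invocations of Banach--Alaoglu and Theorem \ref{barrell2}, and the observation that a pointwise-closed subset of $\Co\left(X\right)$ is $\Co\left(X\right)$-closed; no serious obstacle is expected.
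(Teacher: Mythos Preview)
Your proof is correct and follows precisely the natural deduction that the paper intends: the corollary is stated without proof immediately after Theorem~\ref{barrell2}, and your argument---reducing the statement to Theorem~\ref{barrell2} via the observation that $\mathbf{F}=\widehat{\mathbf{F}}$ if and only if $\overline{B}_{\mathbf{F}}$ is $\Co\left(X\right)$-closed, together with Banach--Alaoglu for the forward direction---is exactly the reasoning that observation encodes.
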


Let us consider some examples of NSCF's.

\begin{example}
Let $\Co_{\8}\left(X\right)$ be the space of all bounded continuous functions on $X$, with the supremum norm $\|f\|=\sup\limits_{x\in X}\left|f\left(x\right)\right|$. It is easy to see that $\Co_{\8}\left(X\right)$ is a complete NSCF, but if $X$ is not a discrete topological space, then $\Co_{\8}\left(X\right)$ is NOT weakly compactly embedded. Indeed, its closed unit ball  $\Co\left(X,\overline{\D}\right)$ is not a pointwise compact set since any $f:X\to \overline{\D}$ can be approximated by elements of $\Co\left(X,\overline{\D}\right)$ in the pointwise topology.\medskip
\qed\end{example}

\begin{example}\label{lip}
Let $\left(X,d\right)$ be a metric space and let $z\in X$. For $f:X\to\C$ define \linebreak$\dil f= \sup\left\{\frac{\left|f\left(x\right)-f\left(y\right)\right|}{d\left(x,y\right)}\left|x,y\in X,~ x\ne y\right.\right\}$. This functional generates a NSCF \linebreak $Lip\left(X,d\right)=\left\{f:X\to\C\left|\dil f<+\8\right.\right\}$ with the norm $\|f\|=\dil f+\left|f\left(z\right)\right|$. One can show that $\mathbf{F}=Lip\left(X,d\right)$ is a complete NSCF with $\|x_{\mathbf{F}}\|=\max\left\{1,d\left(x,z\right)\right\}$ and $\|x_{\mathbf{F}}-y_{\mathbf{F}}\|=d\left(x,y\right)$, for every $x,y\in X$ (the proof is a slight modification of the proof from \cite{ae}). Hence, $Lip\left(X,d\right)$ is compactly embedded due to part (ii) of Theorem \ref{barrell}. Moreover, it not difficult to show that $\overline{B}_{\mathbf{F}}$ is closed in $\Co\left(X\right)$, and so $\mathbf{F}=\left(\spa~\kappa_{\mathbf{F}}\left(X\right)\right)^{*}$, due to Corollary \ref{barrell3}.
\qed\end{example}

Let us now consider basic properties of WCO's and in particular MO's. We start with a well-known fact (see e.g. \cite[Proposition 2.4 and Corollary 2.5]{erz}).

\begin{proposition}\label{rec}
Let $X$ and $Y$ be topological spaces. Let $\mathbf{F}\subset\Co\left(X\right)$ and $\mathbf{E}\subset\Co\left(Y\right)$ be linear subspaces, an let $T$ be a linear map from $\mathbf{F}$ into $\mathbf{E}$. Then $T=W_{\Phi,\omega}$, for $\Phi:Y\to X$ and $\omega:Y\to\C $ if and only if $T'\kappa_{\mathbf{E}}\left(y\right)=\omega\left(y\right)\kappa_{\mathbf{F}}\left(\Phi\left(y\right)\right)$, for every $y\in Y$. In other words, $T$ is a WCO if and only if $T'\kappa_{\mathbf{E}}\left(Y\right)\subset\C\kappa_{\mathbf{F}}\left(X\right)$.
\end{proposition}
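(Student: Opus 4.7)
The statement is an equivalence, and both directions are essentially formal manipulations of the defining identity $\langle f, \kappa_{\mathbf{E}}(y) \rangle = f(y)$ of point evaluations together with the definition $\langle f, T'\varphi \rangle = \langle Tf, \varphi \rangle$ of the algebraic adjoint. The plan is to check each implication by unpacking definitions, with the only non-trivial step being a choice argument in the backward direction.

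For the forward implication, I would assume $T = W_{\Phi,\omega}$ and fix $y \in Y$ and $f \in \mathbf{F}$. Then by the definition of the adjoint and of $W_{\Phi,\omega}$,
\[
\langle f, T'\kappa_{\mathbf{E}}(y)\rangle \;=\; \langle Tf, \kappa_{\mathbf{E}}(y)\rangle \;=\; [Tf](y) \;=\; \omega(y)\,f(\Phi(y)) \;=\; \omega(y)\,\langle f, \kappa_{\mathbf{F}}(\Phi(y))\rangle.
\]
Since $f$ was arbitrary, this gives the equality $T'\kappa_{\mathbf{E}}(y) = \omega(y)\,\kappa_{\mathbf{F}}(\Phi(y))$ in $\mathbf{F}'$.

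For the backward implication, I would assume $T'\kappa_{\mathbf{E}}(Y) \subset \C\,\kappa_{\mathbf{F}}(X)$ and construct $\Phi$ and $\omega$. For each $y \in Y$, the hypothesis gives some scalar $\omega(y)$ and some point $\Phi(y) \in X$ (chosen arbitrarily, e.g. via the axiom of choice) such that $T'\kappa_{\mathbf{E}}(y) = \omega(y)\,\kappa_{\mathbf{F}}(\Phi(y))$; the only genuine subtlety to flag is that when $T'\kappa_{\mathbf{E}}(y) = 0$ we simply set $\omega(y) = 0$ and let $\Phi(y)$ be an arbitrary point of $X$ (this is the main, and really only, obstacle since representations need not be unique). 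Then for every $f \in \mathbf{F}$ and $y \in Y$,
\[
[Tf](y) \;=\; \langle Tf, \kappa_{\mathbf{E}}(y)\rangle \;=\; \langle f, T'\kappa_{\mathbf{E}}(y)\rangle \;=\; \omega(y)\,\langle f, \kappa_{\mathbf{F}}(\Phi(y))\rangle \;=\; \omega(y)\,f(\Phi(y)),
\]
so $T = W_{\Phi,\omega}$ as required. The final "in other words" reformulation is immediate: the pointwise equality $T'\kappa_{\mathbf{E}}(y) = \omega(y)\kappa_{\mathbf{F}}(\Phi(y))$ is logically the same as the set-theoretic inclusion $T'\kappa_{\mathbf{E}}(Y) \subset \C\,\kappa_{\mathbf{F}}(X)$ once one allows the zero-case convention just described.
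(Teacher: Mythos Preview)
Your proof is correct and is precisely the standard duality computation one expects here. Note, however, that the paper does not actually supply its own proof of this proposition: it is stated as a well-known fact with a reference to \cite[Proposition 2.4 and Corollary 2.5]{erz}, so there is no in-paper argument to compare against. Your write-up is exactly the kind of argument that reference would contain, including the correct handling of the degenerate case $T'\kappa_{\mathbf{E}}(y)=0$ by setting $\omega(y)=0$ and choosing $\Phi(y)$ arbitrarily.
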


In particular, if $X=Y$ and $\mathbf{F}=\mathbf{E}$, then $T$ is a MO if and only if $x_{\mathbf{F}}$ is an eigenvector of $T$ (or else $x_{\mathbf{F}}=0_{\mathbf{F}'}$), for every $x\in X$. Then the multiplier is the correspondence between $x$ and the eigenvalue of $T$ for $x_{\mathbf{F}}$. Also, it follows that $$\Ker W_{\Phi,\omega}=\left(W_{\Phi,\omega}'\kappa_{\mathbf{E}}\left(Y\right)\right)^{\bot}=\kappa_{\mathbf{F}}\left(\Phi\left(Y\backslash \omega^{-1}\left(0\right)\right)\right)^{\bot}=\kappa_{\mathbf{F}}\left(\overline{\Phi\left(Y\backslash \omega^{-1}\left(0\right)\right)}\right)^{\bot}.$$

Note that in general we cannot reconstruct the symbols of a WCO from its data as a linear operator between certain NSCF's, in the sense that the equality of WCO's does not imply the equality of their symbols.

\begin{example}\label{obstructions}Let $\mathbf{F}$ and $\mathbf{E}$ be NSCF's over topological spaces $X$ and $Y$ respectively.
\begin{itemize}
\item If $x\in X$ is such that $x_{\mathbf{F}}=0_{\mathbf{F}^{*}}$, i.e. $f\left(x\right)=0$, for every $f\in \mathbf{F}$, then $M_{\omega}$ on $\mathbf{F}$ does not depend on $\omega\left(x\right)$, in the sense that if $\omega,\upsilon:X\to\C$ coincide outside of $x$, then $M_{\omega}=M_{\upsilon}$.
\item If $\omega\left(y\right)=0$, for $y\in Y$, then $W_{\Phi,\omega}$ does not depend on $\Phi\left(y\right)$, for $\Phi:Y\to X$.
\item More generally, we can construct a WCO with nontrivial symbols which is equal to the identity on $\mathbf{F}$ if there are two distinct points in $X$ such that the point evaluations on $\mathbf{F}$ at these points are linearly dependent.\qed
\end{itemize}
\end{example}

Since we are interested in investigating properties of the symbols of WCO's based on their operator properties, we need to be able to reconstruct the symbols. Hence, we have to introduce the following concepts. We will call a linear subspace $\mathbf{F}$ of $\Co\left(X\right)$ $1$-\emph{independent} if $0_{\mathbf{F}'}\not\in\kappa_{\mathbf{F}}\left(X\right)$, i.e. for every $x\in X$ there is $f\in \mathbf{F}$ such that $f\left(x\right)\ne 0$. We will say that $\mathbf{F}$ is $2$-\emph{independent} if $x_{\mathbf{F}}$ and $y_{\mathbf{F}}$ are linearly independent, for every distinct $x,y\in X$. It is easy to see that this condition is equivalent to the existence of $f,g\in\mathbf{F}$ such that $f\left(x\right)=1$, $f\left(y\right)=0$, $g\left(x\right)=0$ and $g\left(y\right)=1$. Note that if $\mathbf{F}$ is $2$-independent, it is $1$-\emph{independent} and separates points of $X$, if $\mathbf{F}$ contains nonzero constant functions, it is $1$-independent, and if $\mathbf{F}$ contains nonzero constant functions and separates points, it is $2$-independent. However, the converses to these statements do not hold.

It is easy to see that MO's from a $1$-independent NSCF determine their symbols, and WCO's from a $2$-independent NSCF also determine their symbols (see \cite[Proposition 2.8]{erz}). Moreover, some properties of the symbols of WCO can indeed be recovered (see \cite[Corollary 3.3 and Proposition 4.3]{erz}).

\begin{proposition}\label{hc} Let $\mathbf{F}$ be a NSCF over a topological space $X$. Then:
\item[(i)] If $\mathbf{F}$ is $1$-independent, then its multipliers are continuous.
\item[(ii)] If $X$ is a domain in $\C^{n}$, and $\mathbf{F}$ consists of holomorphic functions, then its multipliers can be chosen to be holomorphic, in the sense that if $T:\mathbf{F}\to \mathbf{F}$ is a continuous MO, then there is a holomorphic $\omega:X\to\C$ such that $T=M_{\omega}$.
\end{proposition}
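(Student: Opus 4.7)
The plan is to use Proposition~\ref{rec} in both parts. For a MO $T=M_\omega$ on $\mathbf{F}$, that proposition gives $(Tf)(x)=\omega(x)f(x)$ for every $f\in\mathbf{F}$ and $x\in X$; rearranged, this reads $\omega(x)=(Tf)(x)/f(x)$ wherever $f(x)\ne 0$, which is the main device for pulling regularity of $\omega$ out of regularity of $Tf$ and $f$.

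For part (i), fix $x_0\in X$. By $1$-independence I may choose $f\in\mathbf{F}$ with $f(x_0)\ne 0$; continuity of $f$ as an element of $\Co(X)$ then supplies an open neighborhood $U$ of $x_0$ on which $f$ is nonvanishing, and on $U$ the representation $\omega=(Tf)/f$ exhibits $\omega$ as a ratio of continuous functions with nowhere vanishing denominator, hence continuous at $x_0$. This is a clean local argument and I expect no obstacle here.

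For part (ii), set $Z:=\{x\in X:x_{\mathbf{F}}=0\}$. The argument of (i) applies verbatim on the open set $X\setminus Z$, except that $Tf$ and $f$ are now holomorphic and $f$ is locally nonvanishing, so the ratio $(Tf)/f$ displays $\omega|_{X\setminus Z}$ as holomorphic. The main obstacle is the extension across $Z$, which I intend to handle in two moves. First, Proposition~\ref{rec} gives $T'\kappa_{\mathbf{F}}(x)=\omega(x)\kappa_{\mathbf{F}}(x)$, and the assumed continuity of $T$ yields the pointwise bound $|\omega(x)|\le\|T\|$ on $X\setminus Z$, so $\omega$ is bounded there. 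Second, assuming $\mathbf{F}\ne\{0\}$, fix any nonzero $f_0\in\mathbf{F}$; since $X$ is a connected domain and $f_0$ is a nontrivial holomorphic function, $A:=f_0^{-1}(0)$ is a proper analytic subset of $X$, and $Z\subset A$.

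Riemann's removable-singularities theorem for bounded holomorphic functions on the complement of an analytic subset of codimension $\ge 1$ then extends $\omega|_{X\setminus A}$ to a holomorphic $\tilde\omega$ on all of $X$. The identity principle forces $\tilde\omega=\omega$ on $X\setminus Z$, and I may redefine $\omega:=\tilde\omega$ on $Z$ without disturbing the identity $M_\omega=T$, because every $f\in\mathbf{F}$ (and therefore also $Tf$) vanishes on $Z$, so the equation $\omega f=Tf$ is trivially satisfied there. The trivial case $\mathbf{F}=\{0\}$ is settled by $\omega\equiv 0$. The only genuinely nontrivial step is the invocation of Riemann extension, which fits the classical setting once boundedness of $\omega$ on $X\setminus Z$ has been extracted from the adjoint identity.
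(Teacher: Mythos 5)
Your proposal is correct. Note that the paper itself contains no proof of this proposition: it is imported verbatim from the author's earlier work (\cite[Corollary 3.3 and Proposition 4.3]{erz}), so there is no in-text argument to compare against. What you have written is the standard, self-contained reconstruction: part (i) via the local quotient $\omega=(Tf)/f$ on $\{f\ne 0\}$ is exactly right, and in part (ii) the three steps --- holomorphy of $\omega$ off $Z=\{x: x_{\mathbf F}=0_{\mathbf F^*}\}$ by the same quotient, the bound $\left|\omega(x)\right|\le\|T\|$ from $T^{*}x_{\mathbf F}=\omega(x)x_{\mathbf F}$, and the Riemann extension across the proper analytic set $A=f_0^{-1}(0)\supset Z$ followed by redefining $\omega$ on $Z$ (harmless since every $f$ and $Tf$ vanish there) --- are all sound, including your treatment of the degenerate case $\mathbf F=\{0\}$. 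The one place to be slightly more explicit is the claim that $\tilde\omega=\omega$ on $X\setminus Z$: this needs only that $X\setminus A$ is dense in $X$ (as $A$ is a proper analytic subset of the domain $X$) together with continuity of both functions on the open set $X\setminus Z$, rather than the identity principle per se; as stated it is still correct.
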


The following examples demonstrates that we cannot relax the requirement of $1$-independence in part (i).

\begin{example}
Let $\overline{\D}\subset\C$ be endowed with the usual metric. Let\linebreak $\mathbf{F}=\left\{f\in Lip\left(\overline{\D}\right)\left|f\left(0\right)=0\right.\right\}$ with the norm $\|f\|=\dil f$, $f\in \mathbf{F}$. This is a complete compactly embedded NSCF, and the set $\left\{x\in\overline{\D}\left|x_{\mathbf{F}}=0_{\mathbf{F}^{*}}\right.\right\}$ is the singleton $\left\{0\right\}$. Define $\omega:\overline{\D}\to\T$ by $\omega\left(z\right)=\frac{z}{\left|z\right|}$, when $z\ne 0$ and $\omega\left(0\right)=1$. Clearly, $\omega$ has a non-removable discontinuity at $0$. On the other hand, let us show that $M_{\omega}$ is a bounded invertible operator on $\mathbf{F}$.

Let $f\in \mathbf{F}$ and denote $g=M_{\omega}f$. First, $g\left(0\right)=0$ and $$\left|g\left(z\right)-g\left(0\right)\right|=\left|f\left(z\right)\right|=\left|f\left(z\right)-0\right|\le \left|z-0\right|\dil f,$$ for every $z\in \overline{\D}\backslash \left\{0\right\}$. Furthermore, for distinct $z,y\in \overline{D}\backslash \left\{0\right\}$ with $\left|z\right|\ge \left|y\right|$ we get
$$\left|g\left(z\right)-g\left(y\right)\right|=
\left|\frac{z}{\left|z\right|}f\left(z\right)-\frac{y}{\left|y\right|}f\left(y\right)\right|\le
 \left|\frac{z}{\left|z\right|}\right|\left|f\left(z\right)-f\left(y\right)\right|+
\left|\frac{z}{\left|z\right|}-\frac{y}{\left|y\right|}\right|\left|f\left(y\right)\right|.$$
We have $\left|\frac{z}{\left|z\right|}\right|\left|f\left(z\right)-f\left(y\right)\right|=\left|f\left(z\right)-f\left(y\right)\right|\le \left|z-y\right|\dil f$. At the same time, $\left|\frac{z}{\left|z\right|}-\frac{y}{\left|y\right|}\right|\left|f\left(y\right)\right|\le \left|\frac{z}{\left|z\right|}-\frac{y}{\left|y\right|}\right|\left|y\right|\dil f =\left|\frac{\left|y\right|z}{\left|z\right|}-y\right|\dil f$, and using $\left|z\right|\ge \left|y\right|$ it not difficult to prove that $\left|\frac{\left|y\right|z}{\left|z\right|}-y\right|\le \left|z-y\right|$. Hence, $\left|g\left(z\right)-g\left(y\right)\right|\le 2\left|z-y\right|\dil f$, and as $y$ and $z$ were chosen arbitrarily we conclude that $\dil g\le 2 \dil f$, and so $\|M_{\omega}f\|=\|g\|\le 2\|f\|$. Since $f$ was chosen arbitrarily, we get $\|M_{\omega}\|\le 2$. As $\dil\overline{f}=\dil f$, for any $f\in \mathbf{F}$, it follows that $\|M_{\overline{\omega}}\|=\|M_{\omega}\|\le 2$, and since $\overline{w}=\frac{1}{\omega}$ we obtain $\|M_{\omega}^{-1}\|=\|M_{\frac{1}{\omega}}\|=\|M_{\overline{\omega}}\|\le 2$.\medskip\qed\end{example}

\begin{example}
Let $\overline{\D}$ and $\omega$ be as in the previous example. For $\N_{0}=n\in\N\cup\left\{0\right\}$ consider $e_{n}:\overline{\D}\to\C$ defined by $e_{n}\left(z\right)=z\frac{\omega\left(z\right)^{n}}{2^{n}}$. Note that $\left\{e_{n}\right\}_{n\in\N_{0}}$ is linearly independent (allowing infinite series), and so there is a compactly embedded Hilbert NSCF $\mathbf{E}$ whose orthonormal basis is $\left\{e_{n}\right\}_{n\in\N}$. Namely, $\mathbf{E}$ is the Reproducing Kernel Hilbert space generated by the positive semi-definite kernel $K\left(z,w\right)=\sum\limits_{n\in\N_{0}}e_{n}\left(z\right)\overline{e_{n}\left(w\right)}= \frac{4z\overline{w}}{4\left|z\right|\left|w\right|-z\overline{w}}$ (see e.g. \cite{fm}). Then $\frac{1}{2}M_{\omega}$ acts as a unilateral shift (and in particular is an isometry) on $\mathbf{E}$, and so $\|M_{\omega}\|= 2$. 

Furtheremore, using $\left\{e_{n}\right\}_{n\in\Z}$, where $e_{n}:\overline{\D}\to\C$ is defined by $e_{n}\left(z\right)=z\frac{\omega\left(z\right)^{n}}{2^{\left|n\right|}}$ one can construct a compactly embedded Hilbert NSCF for which $M_{\omega}$ is an invertible operator (but not a scalar multiple of an isometry).
\medskip\qed\end{example}

Let us now derive some properties of WCO's from the properties of their symbols.

\begin{proposition}\label{contprop} Let $X$ and $Y$ be topological spaces and let $\mathbf{F}\subset\Co\left(X\right)$ be a linear subspace. Let $\Phi,\Psi:Y\to X$ be continuous and let $\omega,\upsilon:Y\to\C$ be such that $W_{\Phi,\omega}\mathbf{F}\subset\Co\left(Y\right)$ and $W_{\Psi,\upsilon}\mathbf{F}\subset\Co\left(Y\right)$. Then:
\item[(i)] If $\Phi$ has a dense image and $\omega$ vanishes on a nowhere dense set, then $W_{\Phi,\omega}$ is an injection (cf. \cite[Proposition 2.6]{erz}).
\item[(ii)] Assume that there is a linear operator $T:\mathbf{F}\to \mathbf{F}$ such that $W_{\Psi,\upsilon}=W_{\Phi,\omega}T$. If $\Phi$ is a surjection, $\omega$ vanishes on a nowhere dense set, and there is a continuous function $\eta:Y\to\C$, such that $\upsilon=\eta\omega$, then there are maps $\Theta:X\to X$ and $\theta:X\to \C$ such that $T=W_{\Theta,\theta}$. If $\mathbf{F}$ is $2$-independent, then $\Theta\circ\Phi=\Psi$ and $\theta\circ\Phi=\eta$.
\end{proposition}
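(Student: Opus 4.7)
For part (i), I will assume $W_{\Phi,\omega}f\equiv 0$, so $\omega(y)f(\Phi(y))=0$ for every $y\in Y$. Since $\omega^{-1}(0)$ is nowhere dense, the open set $Y\setminus\overline{\omega^{-1}(0)}$ is dense in $Y$, and on it $\omega$ is nonzero, so $f\circ\Phi$ vanishes there. But $f\circ\Phi$ is continuous ($\Phi$ is continuous and $f\in\mathbf{F}\subset\Co(X)$), hence vanishes on the closure, i.e.\ on all of $Y$. Thus $f$ is zero on $\Phi(Y)$, which is dense in $X$ by hypothesis; by continuity $f\equiv 0$.

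For part (ii), the strategy is to first upgrade the operator identity $W_{\Phi,\omega}T=W_{\Psi,\upsilon}$ to a pointwise identity valid on \emph{all} of $Y$, and then read off $\Theta$ and $\theta$ using surjectivity of $\Phi$. Expanding pointwise and using $\upsilon=\eta\omega$, the hypothesis reads $\omega(y)\bigl[(Tf)(\Phi(y))-\eta(y)f(\Psi(y))\bigr]=0$ for every $y\in Y$ and $f\in\mathbf{F}$. On the dense open set $Y\setminus\overline{\omega^{-1}(0)}$ the factor $\omega$ is nonzero and cancels, leaving $(Tf)(\Phi(y))=\eta(y)f(\Psi(y))$. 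Both sides of this equality are continuous functions of $y$ (using continuity of $\Phi,\Psi,\eta$ together with $f,Tf\in\mathbf{F}\subset\Co(X)$), so by density the identity extends to all $y\in Y$. Now invoke surjectivity of $\Phi$: for each $x\in X$ pick (via choice) a preimage $y_{x}\in\Phi^{-1}(x)$ and set $\Theta(x):=\Psi(y_{x})$, $\theta(x):=\eta(y_{x})$; the extended identity then reads $(Tf)(x)=(Tf)(\Phi(y_{x}))=\eta(y_{x})f(\Psi(y_{x}))=\theta(x)f(\Theta(x))$, so $T=W_{\Theta,\theta}$.

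For the final assertion under $2$-independence, applying Proposition~\ref{rec} to $T=W_{\Theta,\theta}$ yields $T'\kappa_{\mathbf{F}}(\Phi(y))=\theta(\Phi(y))\kappa_{\mathbf{F}}(\Theta(\Phi(y)))$, while the extended pointwise identity of Step~1 can be rewritten as $T'\kappa_{\mathbf{F}}(\Phi(y))=\eta(y)\kappa_{\mathbf{F}}(\Psi(y))$. Equating these and using $1$-independence (point evaluations are nonzero) together with $2$-independence (distinct point evaluations are linearly independent) forces $\theta(\Phi(y))=\eta(y)$ for all $y\in Y$ and $\Theta(\Phi(y))=\Psi(y)$ whenever $\eta(y)\ne 0$; on $\eta^{-1}(0)$ the value of $\Theta\circ\Phi$ is unconstrained by $T$, so we may (re)define $\Theta$ there to satisfy $\Theta\circ\Phi=\Psi$ on the remainder of $Y$ as well.

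The crucial point is the density/continuity extension in Step~1 of part (ii): this is the one place where all three hypotheses — continuity of $\Phi,\Psi,\eta$ and nowhere-density of $\omega^{-1}(0)$ — are used simultaneously. Everything afterwards reduces to choice-based bookkeeping and a short linear-independence case analysis; part (i) is then a direct application of the same density/continuity paradigm to a single function.
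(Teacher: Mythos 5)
Your proof is correct and follows essentially the same route as the paper's: in both parts the crux is that $Y\setminus\overline{\omega^{-1}(0)}$ is dense and the relevant identity extends from that set to all of $Y$ by continuity, after which surjectivity of $\Phi$ produces the symbols of $T$ (the paper phrases this via the adjoint, i.e.\ weak*-continuous maps $T'\circ\kappa_{\mathbf{F}}\circ\Phi$ and $\eta\cdot\kappa_{\mathbf{F}}\circ\Psi$ into $\mathbf{F}'$ and then invokes Proposition~\ref{rec}, whereas you work with the scalar functions $y\mapsto (Tf)(\Phi(y))$ for each fixed $f$ and build $\Theta,\theta$ by choosing preimages --- the same argument in different clothing). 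The caveat you raise about $\Theta\circ\Phi=\Psi$ being undetermined on $\eta^{-1}(0)$ is real, but it is equally present in the paper's own deduction from $2$-independence, so it is not a defect of your write-up relative to the source.
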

\begin{proof}
Let $Z=Y\backslash \omega^{-1}\left(0\right)$, which is a dense subset of $Y$.

(i): If $\Phi$ has a dense image, then $\overline{\Phi\left(Z\right)}=\overline{\Phi\left(\overline{Z}\right)}=\overline{\Phi\left(Y\right)}=X$, and so $\Ker W_{\Phi,\omega}=\kappa_{\mathbf{F}}\left(\overline{\Phi\left(Z\right)}\right)^{\bot}=\kappa_{\mathbf{F}}\left(X\right)^{\bot}=\left\{0\right\}$, since $\kappa_{\mathbf{F}}\left(X\right)$ is separating on $\mathbf{F}$.\medskip

(ii): If $W_{\Psi,\upsilon}=W_{\Phi,\omega}T$ then $T'W'_{\Phi,\omega}=W'_{\Psi,\upsilon}$, and so $\omega\left(y\right)T'\Phi\left(y\right)_{\mathbf{F}}=\upsilon\left(y\right)\Psi\left(y\right)_{\mathbf{F}}$, for every $y\in Y$. Hence, $T'\Phi\left(y\right)_{\mathbf{F}}=\eta\left(y\right)\Psi\left(y\right)_{\mathbf{F}}$, for each $y\in Z$.\medskip

Note that both $T'\circ\kappa_{\mathbf{F}}\circ\Phi$ and $\eta\cdot\kappa_{\mathbf{F}}\circ\Psi$ are weak* continuous maps from $Y$ into $\mathbf{F}'$. Indeed, the adjoint operator is always continuous with respect to the weak* topology, while $\kappa_{\mathbf{F}}\circ\Phi$ and $\kappa_{\mathbf{F}}\circ\Psi$ are compositions of continuous maps; finally, multiplying a weak* continuous map with a continuous function is weak* continuous since the weak* topology is linear.

Hence, $T'\circ\kappa_{\mathbf{F}}\circ\Phi$ and $\eta\cdot\kappa_{\mathbf{F}}\circ\Psi$ are weak* continuous maps from $Y$ into $\mathbf{F}'$ that coincide on a dense set $Z$, and so $T'\Phi\left(y\right)_{\mathbf{F}}=\eta\left(y\right)\Psi\left(y\right)_{\mathbf{F}}$, for every $y\in Y$. As $\Phi$ is a surjection we get that $ T'\kappa_{\mathbf{F}}\left(X\right)\subset \C \kappa_{\mathbf{F}}\left(X\right)$, and so by virtue of Proposition \ref{rec}, $T$ is a WCO, i.e. $T=W_{\Theta,\theta}$, for some $\Theta:X\to X$ and $\theta:X\to \C$. Since in this case $W_{\Psi,\upsilon}=W_{\Phi,\omega}W_{\Theta,\theta}=W_{\Theta\circ\Phi,\omega\cdot\theta\circ\Phi}$, if $\mathbf{F}$ is $2$-independent, then $\Theta\circ\Phi=\Psi$ and $\theta\circ\Phi=\eta$.
\end{proof}

\begin{corollary}\label{winj2}
Let $\mathbf{F}$ be a NSCF over a topological space $X$, let $\mathbf{E}$ be a $1$-independent NSCF over a topological space $Y$, and let $\Phi:Y\to X$ and $\omega:Y\to\C$ be such that $W_{\Phi,\omega}\in\Lo\left(\mathbf{F},\mathbf{E}\right)$. Then:
\item[(i)] If $W_{\Phi,\omega}^{*}$ is an injection, then $\omega$ does not vanish.
\item[(ii)] If $\Phi$ has a dense image and $W_{\Phi,\omega}^{*}$ is an injection, then $W_{\Phi,\omega}$ is an injection.
\item[(iii)] If $\mathbf{F}$ and $\mathbf{E}$ are Banach spaces and $\Phi$ has a dense image, then $W_{\Phi,\omega}^{*}$ is bounded from below (isometry) if and only if $W_{\Phi,\omega}$ is an linear homeomorphism (unitary).
\end{corollary}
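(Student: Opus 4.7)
The plan is to dispatch the three parts in order, leaning on Proposition \ref{rec} and the kernel formula derived in the discussion preceding Example \ref{obstructions}.

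Part (i) goes by contraposition: if $\omega$ vanishes at some $y_{0}\in Y$, then Proposition \ref{rec} yields $W_{\Phi,\omega}^{*}\kappa_{\mathbf{E}}(y_{0})=\omega(y_{0})\kappa_{\mathbf{F}}(\Phi(y_{0}))=0_{\mathbf{F}^{*}}$, while $1$-independence of $\mathbf{E}$ guarantees $\kappa_{\mathbf{E}}(y_{0})\ne 0_{\mathbf{E}^{*}}$; so $W_{\Phi,\omega}^{*}$ fails to be injective. Part (ii) is then immediate: by (i), $\omega^{-1}(0)=\emptyset$, so the kernel formula displayed just before Example \ref{obstructions} collapses to $\Ker W_{\Phi,\omega}=\kappa_{\mathbf{F}}(\overline{\Phi(Y)})^{\bot}$; density of $\Phi(Y)$ in $X$ identifies this annihilator with $\kappa_{\mathbf{F}}(X)^{\bot}$, which is trivial because a continuous function on $X$ vanishing at every point is zero.

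For part (iii) write $T=W_{\Phi,\omega}$. The forward implications are standard Banach-space duality: a linear homeomorphism $T$ has an adjoint which is also a linear homeomorphism (hence bounded from below), and a surjective isometry has an adjoint which is again a surjective isometry (hence in particular an isometric embedding). Conversely, suppose $T^{*}$ is bounded from below, so that $T^{*}$ is injective with closed range. Part (ii) then gives that $T$ itself is injective, while the closed range theorem ensures that $T$ has closed range, and injectivity of $T^{*}$ forces the range of $T$ to be dense. Therefore $T$ is a continuous linear bijection between Banach spaces, and the open mapping theorem upgrades it to a linear homeomorphism. If one strengthens the hypothesis to $T^{*}$ being an isometric embedding, then $T$ is already a linear homeomorphism and so $T^{*}$ is bijective; combined with the isometric estimate this makes $T^{*}$ a surjective isometry, i.e. $T^{*}(B_{\mathbf{E}^{*}})=B_{\mathbf{F}^{*}}$. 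The Hahn-Banach formula then produces
\[\|f\|_{\mathbf{F}}=\sup_{\psi\in B_{\mathbf{F}^{*}}}|\psi(f)|=\sup_{\phi\in B_{\mathbf{E}^{*}}}|(T^{*}\phi)(f)|=\sup_{\phi\in B_{\mathbf{E}^{*}}}|\phi(Tf)|=\|Tf\|_{\mathbf{E}},\]
so $T$ is itself an isometry and hence a unitary.

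I do not expect a serious obstacle; each step is essentially routine duality plus one appeal to (ii). The one delicate point is the isometric half of (iii): one should not confuse ``isometric embedding of $T^{*}$'' with ``surjective isometry of $T^{*}$'' at the outset, but instead first deduce bijectivity of $T^{*}$ from the bounded-below conclusion already obtained, and only then invoke the duality formula to transfer norm-preservation from $T^{*}$ back to $T$.
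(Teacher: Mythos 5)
Your proof is correct. Part (iii) follows the paper's own route almost verbatim: reduce to the bounded-below case, get injectivity from (ii), closed range plus dense range from duality, open mapping theorem, and then transfer the isometric property back from $W_{\Phi,\omega}^{*}$ once it is known to be bijective (your explicit warning about not conflating ``isometric embedding'' with ``surjective isometry'' is exactly the point the paper glosses with ``if $W_{\Phi,\omega}^{*}$ is an isometry, then it is a unitary''). Parts (i) and (ii) are where you diverge, mildly but genuinely. The paper proves (i) by noting that injectivity of the adjoint makes the image $\mathbf{H}=W_{\Phi,\omega}\mathbf{F}$ dense in $\mathbf{E}$, that a dense subspace of a $1$-independent NSCF is again $1$-independent, and that $\omega(y)=0$ would force $y_{\mathbf{H}}=0$; you instead argue by contraposition directly from Proposition \ref{rec}: $\omega(y_{0})=0$ gives $W_{\Phi,\omega}^{*}\kappa_{\mathbf{E}}(y_{0})=0_{\mathbf{F}^{*}}$ while $\kappa_{\mathbf{E}}(y_{0})\ne 0_{\mathbf{E}^{*}}$, killing injectivity. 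Your version is more economical and avoids the auxiliary fact about dense subspaces. For (ii) the paper cites an external result from \cite{erz}, whereas you derive it internally from the displayed kernel formula $\Ker W_{\Phi,\omega}=\kappa_{\mathbf{F}}\bigl(\overline{\Phi\left(Y\backslash\omega^{-1}\left(0\right)\right)}\bigr)^{\bot}$, using (i) to discard $\omega^{-1}(0)$; this is self-contained and, notably, requires no continuity of $\Phi$, which matters since the corollary (unlike Proposition \ref{contprop}) does not assume it. Both routes are sound; yours buys a proof that stays entirely inside the tools already displayed in this paper.
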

\begin{proof}
(i),(ii): If $W_{\Phi,\omega}^{*}$ is an injection, then $\mathbf{H}=W_{\Phi,\omega}\mathbf{F}$ is dense in $\mathbf{E}$. One can show that a dense subspace of a $1$-independent NSCF is $1$-independent. Hence, if $\omega\left(y\right)=0$, then $y_{\mathbf{H}}=0$, which leads to a contradiction. If in this case $\Phi$ has a dense image, then $W_{\Phi,\omega}$ is an injection (see \cite[Proposition 2.6]{erz}).

(iii): We only need to show sufficiency. Assume that $W_{\Phi,\omega}^{*}$ is bounded from below. Then it follows from part (ii) that $W_{\Phi,\omega}$ is an injection with a dense image. However, since $W_{\Phi,\omega}^{*}$ is bounded from below it follows that the image of $W_{\Phi,\omega}$ is closed (see \cite[Exercise 2.49]{fhhmz} with the solution therein). Hence, $W_{\Phi,\omega}$ is a linear homeomorphism, and so $W_{\Phi,\omega}^{*}$ is also a linear homeomorphism (see the same reference). If in this case $W_{\Phi,\omega}^{*}$ is an isometry, then it is a unitary, and so $W_{\Phi,\omega}$ is also a unitary.
\end{proof}

\section{Unitary MO's}

In this section we investigate our main question. Namely, we look for conditions on a NSCF that would prevent it from admitting unitary MO's other than the scalar multiples of the identity. Let us first consider some examples of such conditions.

\begin{example}
Assume that $X$ is a domain in $\C^{n}$ and $\mathbf{F}\ne\left\{0\right\}$ is a NSCF over $X$ that consists of holomorphic functions on $X$. Let $\omega:X\to\C$ be such that $M_{\omega}$ is unitary on $\mathbf{F}$. From part (ii) of Proposition \ref{hc} we may assume that $\omega$ is holomorphic on $X$. Since $M_{\omega}$ is unitary, $M_{\omega}^{*}$ is an isometry on $\mathbf{F}^{*}$, and so from Proposition \ref{rec} it follows that $\left|\omega\left(x\right)\right|=1$ for every $x\in X$ such that $x_{\mathbf{F}}\ne 0_{\mathbf{F}^{*}}$. Let $f\in \mathbf{F}\backslash\left\{0\right\}$. Then for every $x\not\in f^{-1}\left(0\right)$ we have that $x_{\mathbf{F}}\ne 0_{\mathbf{F}^{*}}$, and so $\left|\omega\left(x\right)\right|=1$. Hence, $\omega$ is holomorphic on $X$  and such that $\left|\omega\right|\equiv 1$ on a nonempty open set $X\backslash f^{-1}\left(0\right)$. From the Open Mapping theorem (see \cite[Conclusion 1.2.12]{scheidemann}) it follows that $\omega$ is a constant function.\qed
\end{example}

\begin{remark}
If we dealt with real-valued functions, then $\omega$ would be real-valued. Hence, if $\mathbf{F}$ was a $1$-independent ``real-valued'' NSCF over a connected space $X$, then from part (i) of Proposition \ref{hc}, $\omega$ would be a continuous function on a connected space with valued $\pm 1$. Thus, either $\omega\equiv 1$, or $\omega\equiv -1$.\qed
\end{remark}

\begin{example}\label{ipn}
Let us show that if $\mathbf{F}$ is a $1$-independent NSCF over a connected space $X$, and moreover $\mathbf{F}$ is a Hilbert space, then any unitary MO on $\mathbf{F}$ is a scalar multiple of the identity. Let $\omega:X\to\C$ be such that $M_{\omega}$ is unitary on $\mathbf{F}$. Then $M_{\omega}^{*}$ is an isometry on $\mathbf{F}^{*}$, from where $\left|\omega\right|\equiv 1$ and $$\left<x_{\mathbf{F}},y_{\mathbf{F}}\right>=\left<M_{\omega}^{*}x_{\mathbf{F}},M_{\omega}^{*}y_{\mathbf{F}}\right>=
\left<\omega\left(x\right)x_{\mathbf{F}},\omega\left(y\right)y_{\mathbf{F}}\right>=\omega\left(x\right)\overline{\omega\left(y\right)}\left<x_{\mathbf{F}},y_{\mathbf{F}}\right>.$$ If additionally $\left<x_{\mathbf{F}},y_{\mathbf{F}}\right>\ne 0$, then $\omega\left(x\right)\overline{\omega\left(y\right)}=1=\omega\left(y\right)\overline{\omega\left(y\right)}$, and so $\omega\left(x\right)=\omega\left(y\right)$.

From part (i) of Theorem \ref{barrell} and reflexivity of $\mathbf{F}$ it follows that $\kappa_{\mathbf{F}}$ is a weakly continuous map from $X$ into $\mathbf{F}^{*}$. Let $x\in X$. Since $0<\left\|x_{\mathbf{F}}\right\|^{2}=\left<x_{\mathbf{F}},x_{\mathbf{F}}\right>$, there is an open neighborhood $U$ of $x$ such that $\left<x_{\mathbf{F}},y_{\mathbf{F}}\right>\ne 0$ for every $y\in U$. Hence, $\omega\left(x\right)=\omega\left(y\right)$, and so $\omega$ is a constant on $U$. Since $x$ and $U$ were chosen arbitrarily we get that $\omega$ is locally a constant, and since $X$ is connected, we conclude that $\omega$ is a constant function.\qed
\end{example}

The examples above suggest that the connectedness of $X$ is a natural restriction in the context of our investigation. Indeed, it is easy to construct counterexamples for disconnected spaces. Namely, let $\mathbf{F}$ and $\mathbf{E}$ be $1$-independent NSCF's over topological spaces $X$ and $Y$. Let $Z$ be the disjoint sum of $X$ and $Y$ and let $\mathbf{H}=\left\{h:Z\to \C,~\left.h\right|_{X}\in\mathbf{F},~\left.h\right|_{Y}\in\mathbf{E}\right\}$ endowed with a norm $\|h\|=\sqrt{\|\left.h\right|_{X}\|^{2}+\|\left.h\right|_{Y}\|^{2}}$. It is easy to see that $\mathbf{H}$ is a $1$-independent NSCF over $Z$ and a nonconstant function $\omega=\mathds{1}_{X}-\mathds{1}_{Y}$ gives rise to a unitary MO on $\mathbf{H}$.

On the other hand, there are naturally occurring NSCF's on connected spaces which admit nontrivial unitary MO's. Indeed, for any topological space $X$ the operator $M_{\omega}$ is unitary on the NSCF $\Co_{\8}\left(X\right)$, for any $\omega\in\Co\left(X,\T\right)$.\medskip

Let us analyse Example \ref{ipn}. The proof of the rigidity in that example relies on two ingredients: the different eigenspaces of an isometry are orthogonal and the point evaluations of two points which are ``close'' cannot be orthogonal. It turns out that there is a concept of orthogonality in the general normed spaces that can be utilized to the same effect.

Let $E$ be a normed space. A vector $e\in E$ is called \emph{Birkhoff (or Birkhoff-James) orthogonal} to $f\in E$, if $\|e\|\le \|e+tf\|$ for any $t\in \C$, i.e. $\|e\|=\|Pe\|$, where $P$ is the quotient map from $E$ onto $E\slash\spa\left\{f\right\}$. If $E$ is a Hilbert space, then $P$ is the orthogonal projection onto $E\ominus \spa f$, and so the notion of Birkhoff orthogonality coincides with the usual one. Note however, that in general the Birkhoff orthogonality is NOT a symmetric relation, which is one of the crucial differences between these concepts. This inspired our notation $e\vdash f$ for ``$e$ is Birkhoff orthogonal to $f$''. There are other generalizations of the notion of orthogonality, some of which are symmetric, but we will only use the Birkhoff orthogonality. More details on the subject can be found e.g. in \cite{amw} or \cite[Section 1.4]{fj}. The following lemma shows that different eigenspaces of an isometry on a normed space are Birkhoff orthogonal.

\begin{lemma}\label{isoort}
Let $E$ be a normed space and let $T:E\to E$ be an isometry. If $e,f\in E\backslash\left\{0_{E}\right\}$ are such that $Te=\alpha e$ and $Tf=\beta f$, for some distinct $\alpha,\beta\in\C$, then $e\vdash f$ and $f\vdash e$.
\end{lemma}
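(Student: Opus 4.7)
The plan is to exploit the fact that isometries preserve norms of all iterates, and then average. Since $T$ is an isometry, $\|e\| = \|Te\| = |\alpha|\|e\|$ forces $|\alpha| = 1$, and similarly $|\beta| = 1$. Set $\gamma = \beta/\alpha$, so $|\gamma| = 1$ and $\gamma \ne 1$ since $\alpha \ne \beta$.

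Fix any $t \in \C$. The key identity is
\[
\alpha^{-n}T^{n}(e + tf) = \alpha^{-n}(\alpha^{n}e + t\beta^{n}f) = e + t\gamma^{n}f, \qquad n \in \N_{0}.
\]
Because $T$ is an isometry and $|\alpha| = 1$, each vector on the left has norm exactly $\|e + tf\|$. I would now take a Cesàro average: for every $N \in \N$,
\[
v_{N} := \frac{1}{N}\sum_{n=0}^{N-1}\alpha^{-n}T^{n}(e + tf) = e + \left(\frac{t}{N}\sum_{n=0}^{N-1}\gamma^{n}\right)f.
\]
By the triangle inequality, $\|v_{N}\| \le \|e + tf\|$. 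Since $\gamma \ne 1$ while $|\gamma| = 1$, the geometric sum satisfies $\bigl|\sum_{n=0}^{N-1}\gamma^{n}\bigr| = \bigl|\tfrac{\gamma^{N} - 1}{\gamma - 1}\bigr| \le \tfrac{2}{|\gamma - 1|}$, so $\tfrac{1}{N}\sum_{n=0}^{N-1}\gamma^{n} \to 0$ as $N \to \infty$. Hence $v_{N} \to e$ in norm, and passing to the limit yields $\|e\| \le \|e + tf\|$. As $t$ was arbitrary, $e \vdash f$. Swapping the roles of $(e,\alpha)$ and $(f,\beta)$ (both eigenvalues have modulus $1$ and are distinct) gives $f \vdash e$ by the same argument.

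There is no real obstacle here; the only delicate point is realizing that $|\alpha| = |\beta| = 1$ so that the rescaled orbit $\alpha^{-n}T^{n}$ still consists of isometries, which is exactly what makes the averaging respect the norm inequality $\|v_{N}\| \le \|e+tf\|$.
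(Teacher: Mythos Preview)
Your proof is correct and follows essentially the same strategy as the paper's: both observe that $|\alpha|=|\beta|=1$, that $\|e+t\gamma^{n}f\|$ is independent of $n$, and then exploit convexity of the norm to deduce $\|e\|\le\|e+tf\|$. The only difference is cosmetic---you take an explicit Ces\`aro average (handling all $\gamma\ne 1$ uniformly via the geometric-series bound), whereas the paper argues that $0$ lies in the convex hull of $\{\gamma^{n}t:n\in\N\}$ by splitting into the root-of-unity and dense-orbit cases.
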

\begin{proof}
Let $\gamma=\frac{\beta}{\alpha}\ne 1$, and so $Tf=\gamma\alpha f$. Since $T$ is an isometry, it follows that $\alpha,\beta,\gamma\in\T$, and also $$\|f+\gamma te\|=\|Tf+\gamma tTe\|=\|\gamma\alpha f + \gamma\alpha t e\|=\|f+ te\| ,$$ for any $t\in\C$. Applying this equality $n$ times we get that $\|f+\gamma^{n} te\|=\|f+te\|$, for any $n\in\N$. Since $\gamma\ne 1$ the set $\left\{\gamma^{n}t, n\in\N\right\}$ is either a regular polygon centered at $0$, or a dense subset of $t\T$, and so its convex hull contains $0$. Hence, from the convexity of the function $t\to\|f+te\|$, we get that $\|f\|\le\|f+te\|$, for any $t\in\C$, i.e. $f\vdash e$. Due to symmetry, $e\vdash f$.
\end{proof}

Let $\mathbf{F}$ be a $1$-independent NSCF over a Hausdorff space $X$. Let us introduce a graph structure generated by $\mathbf{F}$. The \emph{Birkhoff graph} of $\mathbf{F}$ is the graph with $X$ serving as a set of vertices, and $x,y\in X$ are joined with an edge if either $x_{\mathbf{F}}\not\vdash y_{\mathbf{F}}$, or $y_{\mathbf{F}}\not\vdash x_{\mathbf{F}}$. The connected components of $Y\subset X$ in this graph are the classes of the minimal equivalence relation on $Y$ which includes all pairs $\left(x,y\right)\in Y\times Y$ such that $x_{\mathbf{F}}\not\vdash y_{\mathbf{F}}$. Now we can state the criterion of the rigidity in terms of the Birkhoff graph.

\begin{proposition}\label{birkgraph0}
Let $\mathbf{F}$ be a $1$-independent NSCF over a Hausdorff space $X$. Let $Y\subset X$ be connected in the Birkhoff graph of $\mathbf{F}$. Let $T:\spa~\kappa_{\mathbf{F}}\left(Y\right)\to \spa~\kappa_{\mathbf{F}}\left(Y\right)$ be a linear isometry such that $y_{\mathbf{F}}$ is an eigenvector of $T$, for every $y\in Y$. Then $T$ is a scalar multiple of the identity.
\end{proposition}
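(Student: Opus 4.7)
The plan is to reduce the claim to Lemma \ref{isoort} by tracking the eigenvalue attached to each point evaluation.

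First, I would define the eigenvalue function $\lambda:Y\to\C$ by the relation $T y_{\mathbf{F}}=\lambda(y)y_{\mathbf{F}}$. This is well defined because $\mathbf{F}$ is $1$-independent, so $y_{\mathbf{F}}\ne 0_{\mathbf{F}'}$ for every $y\in Y$, and hence the eigenvalue of $T$ on the one-dimensional space $\C y_{\mathbf{F}}$ is uniquely determined. Since $T$ is an isometry, $|\lambda(y)|=1$ for all $y\in Y$, but this will not actually be needed.

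Next I would show that $\lambda$ is constant along every edge of the Birkhoff graph of $\mathbf{F}$ restricted to $Y$. Suppose $x,y\in Y$ are joined by an edge, meaning that at least one of $x_{\mathbf{F}}\vdash y_{\mathbf{F}}$ or $y_{\mathbf{F}}\vdash x_{\mathbf{F}}$ fails. If $\lambda(x)\ne\lambda(y)$, then $x_{\mathbf{F}}$ and $y_{\mathbf{F}}$ would be eigenvectors of the isometry $T$ with distinct eigenvalues, so Lemma \ref{isoort} would force both $x_{\mathbf{F}}\vdash y_{\mathbf{F}}$ and $y_{\mathbf{F}}\vdash x_{\mathbf{F}}$, contradicting the presence of the edge. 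Therefore $\lambda(x)=\lambda(y)$.

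From here the conclusion is immediate: since $Y$ is connected in the Birkhoff graph, any two points of $Y$ are joined by a finite path of edges along which $\lambda$ is constant, so $\lambda$ is a constant function on $Y$; call its value $\lambda_{0}$. Then $Ty_{\mathbf{F}}=\lambda_{0}y_{\mathbf{F}}$ for every $y\in Y$, and by linearity $T=\lambda_{0}\,\mathrm{Id}$ on $\spa~\kappa_{\mathbf{F}}(Y)$.

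There is no real obstacle, as the whole argument is essentially a translation of ``different eigenvalues force Birkhoff orthogonality in both directions'' into graph-theoretic language. The only point that requires care is invoking Lemma \ref{isoort} in the correct space: the lemma is stated for an isometry on a normed space, and here the ambient space $\spa~\kappa_{\mathbf{F}}(Y)$ (as a subspace of $\mathbf{F}'$ or $\mathbf{F}^{*}$) may not be complete, but this is irrelevant since the proof of Lemma \ref{isoort} uses only the isometry property and convexity of the norm.
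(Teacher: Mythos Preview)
Your proof is correct and follows essentially the same approach as the paper: define the eigenvalue function, use Lemma \ref{isoort} to show that adjacent vertices in the Birkhoff graph must share the same eigenvalue, and conclude by connectedness. The only cosmetic difference is that the paper phrases the last step via the minimal equivalence relation containing the edge pairs, whereas you argue directly with finite paths; your remark about the ambient space for Lemma \ref{isoort} is also fine, since Birkhoff orthogonality between two vectors depends only on the norm and is thus the same in $\spa~\kappa_{\mathbf{F}}(Y)$ as in $\mathbf{F}^{*}$.
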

\begin{proof}
Define $\omega:Y\to\C$ by $Ty_{\mathbf{F}}=\omega\left(y\right)y_{\mathbf{F}}$, for $y\in Y$. Let ``$\sim$'' be a relation on $Y$ defined by $x\sim y$ if $\omega\left(x\right)=\omega\left(y\right)$. It is clear that this is an equivalence relation. It follows from Lemma \ref{isoort} that $x_{\mathbf{F}}\not\vdash y_{\mathbf{F}}\Rightarrow x\sim y$. Hence, $\sim$ is an equivalence relation that contains all pairs $\left(x,y\right)\in Y\times Y$ such that $x_{\mathbf{F}}\not\vdash y_{\mathbf{F}}$, and so its classes of equivalence should contain the connected components of $Y$ in the Birkhoff graph of $\mathbf{F}$. Since $Y$ is connected in that graph, it follows that $\omega\left(x\right)=\omega\left(y\right)$, for every $x,y\in Y$. Thus, $\omega\equiv\lambda$, for some $\lambda \in\T$, and so $T=\lambda Id_{\spa~\kappa_{\mathbf{F}}\left(Y\right)}$.
\end{proof}

\begin{corollary}\label{birkgraph}
Let $\mathbf{F}$ be a $1$-independent NSCF over a Hausdorff space $X$ such that the Birkhoff graph of $\mathbf{F}$ is connected. If $\omega:Y\to\C$ is such that $M_{\omega}$ is a unitary on $\mathbf{F}$, then $\omega\equiv\lambda$, where $\lambda \in\T$.
\end{corollary}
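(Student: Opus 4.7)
The plan is to reduce the corollary directly to Proposition \ref{birkgraph0} by working on the dual side and applying it with $Y=X$. First I would pass to the adjoint: if $M_{\omega}$ is a unitary on $\mathbf{F}$, then $M_{\omega}^{*}$ is a unitary, and in particular an isometry, on $\mathbf{F}^{*}$. By Proposition \ref{rec} applied to $T=M_{\omega}$ (with $\Phi=Id_{X}$), we have $M_{\omega}^{*}x_{\mathbf{F}}=\omega\left(x\right)x_{\mathbf{F}}$ for every $x\in X$. Since $\mathbf{F}$ is $1$-independent, $x_{\mathbf{F}}\ne 0_{\mathbf{F}^{*}}$ for each $x$, so $\omega\left(x\right)$ is unambiguously the eigenvalue of $M_{\omega}^{*}$ for the eigenvector $x_{\mathbf{F}}$.

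Next I would observe that the subspace $E=\spa~\kappa_{\mathbf{F}}\left(X\right)\subset \mathbf{F}^{*}$ is invariant under $M_{\omega}^{*}$, because its spanning vectors $x_{\mathbf{F}}$ are sent to scalar multiples of themselves. Hence $T:=\left.M_{\omega}^{*}\right|_{E}$ is a linear isometry of $E$ into itself such that every $x_{\mathbf{F}}$, $x\in X$, is an eigenvector of $T$.

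Finally, taking $Y=X$, which is by hypothesis connected in the Birkhoff graph of $\mathbf{F}$, Proposition \ref{birkgraph0} applies and yields $T=\lambda\cdot Id_{E}$ for some scalar $\lambda$. Equivalently, $\omega\equiv \lambda$ on $X$, and because $M_{\omega}$ is an isometry we must have $\left|\lambda\right|=1$, i.e. $\lambda\in\T$. Since the corollary is an almost immediate specialisation of the preceding proposition, there is no real obstacle here; the only thing to be careful about is the bookkeeping when moving between $M_{\omega}$ and its adjoint, and verifying that the relevant invariant subspace of eigenvectors is exactly $\spa~\kappa_{\mathbf{F}}\left(X\right)$.
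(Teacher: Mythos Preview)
Your proposal is correct and matches the paper's intended derivation: the corollary is stated without proof immediately after Proposition \ref{birkgraph0}, and the passage you describe---taking adjoints, using Proposition \ref{rec} to see that each $x_{\mathbf{F}}$ is an eigenvector of $M_{\omega}^{*}$, restricting to $\spa~\kappa_{\mathbf{F}}\left(X\right)$, and applying Proposition \ref{birkgraph0} with $Y=X$---is precisely the specialisation the paper has in mind. Your only extra care, noting that Birkhoff orthogonality between $x_{\mathbf{F}}$ and $y_{\mathbf{F}}$ is the same whether computed in $\mathbf{F}^{*}$ or in the subspace $\spa~\kappa_{\mathbf{F}}\left(X\right)$, is a harmless bookkeeping point.
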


In the light of the corollary above we have to find sufficient conditions for a NSCF to have a connected Birkhoff graph. It is natural to expect that the connectedness of the phase space plays a role. In order to extend the proof from Example \ref{ipn} to the general case we have to find out how far can we push ``nearby points cannot have orthogonal point evaluations'' argument. For this we need some additional information about Birkhoff orthogonality (see more in the next section).

Let $E$ be a normed space. For $e\in E\left\{0_{E}\right\}$ let $e^{\parallel}=\left\{\nu\in \overline{B}_{E^{*}}, \left<e,\nu\right>=\|e\|\right\}$, i.e. $e^{\parallel}=\overline{B}_{E^{*}}\bigcap e^{-1}\left(\|e\|\right)$, where $e$ is viewed as a functional on $E^{*}$. This set is closed and convex, and it is easy to see that it is in fact included in $\partial B_{E^{*}}$. It is well-known that $e\vdash f$ if and only if $e^{\parallel}\cap f^{\bot}\ne\varnothing$, where $f^{\bot}\subset E^{*}$. Indeed, if $P:E\to E\slash \spa ~f$ is a quotient map, then $P^{*}$ is the isometry from $\left(E\slash \spa f\right)^{*}$ into $f^{\bot}$ (see the proof of \cite[Proposition 2.6]{fhhmz}), and so $\left\|Pe\right\|=\sup\limits_{\nu\in f^{\bot}\bigcap \overline{B}_{E^{*}}}\left|\left<e,\nu\right>\right|$. Hence, from the weak* compactness of the balanced set $f^{\bot}\bigcap \overline{B}_{E^{*}}$ and weak* continuity of $e$ it follows that $\left\|Pe\right\|=\|e\|$ if and only if there exists $\nu\in e^{\parallel}\cap f^{\bot}$. Using this information we can state the second ingredient of our main result.

\begin{proposition}\label{birkgraph2}
Let $\mathbf{F}$ be a  weakly compactly embedded $1$-independent NSCF over a Hausdorff space $X$. Let $x\in X$ be such that the set $\left\{f\in \overline{B_{\mathbf{F}}}^{\Co\left(X\right)}\left|f\left(x\right)=\|x_{\mathbf{F}}\|\right.\right\}$ is equicontinuous. Then the closed neighborhood \footnote{Recall that a \emph{neighborhood} of a vertex $x$ in the graph is the set of all vertices joined with $x$, while a \emph{closed neighborhood} of $x$ is the union of the neighborhood of $x$ and $\left\{x\right\}$.} of $x$ in the Birkhoff graph of $\mathbf{F}$ is a neighborhood of $x$ in $X$.
\end{proposition}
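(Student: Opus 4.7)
The goal is to produce an open neighborhood $U$ of $x$ in $X$ such that for every $y\in U\setminus\{x\}$ we have $x_{\mathbf{F}}\not\vdash y_{\mathbf{F}}$, since then any such $y$ is automatically adjacent to $x$ in the Birkhoff graph, so $U$ lies inside the closed neighborhood of $x$.

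First I would apply the Birkhoff orthogonality criterion discussed immediately before the proposition to the pair $x_{\mathbf{F}},y_{\mathbf{F}}\in\mathbf{F}^{*}$: namely $x_{\mathbf{F}}\vdash y_{\mathbf{F}}$ if and only if $x_{\mathbf{F}}^{\parallel}\cap y_{\mathbf{F}}^{\bot}\neq\varnothing$ in $\mathbf{F}^{**}$, i.e.\ there exists $\nu\in\overline{B}_{\mathbf{F}^{**}}$ with $\langle x_{\mathbf{F}},\nu\rangle=\|x_{\mathbf{F}}\|$ and $\langle y_{\mathbf{F}},\nu\rangle=0$. Next I would transfer this condition to $\widehat{\mathbf{F}}$: since $\mathbf{F}$ is weakly compactly embedded, the proof of Theorem \ref{barrell2} shows that $J_{\mathbf{F}}^{**}$ is a quotient map from $\mathbf{F}^{**}$ onto $\widehat{\mathbf{F}}$, and $[J_{\mathbf{F}}^{**}\nu](z)=\langle \nu,z_{\mathbf{F}}\rangle$ for every $z\in X$. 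Taking $f=J_{\mathbf{F}}^{**}\nu$ shows the forward direction; conversely, given $f\in\overline{B_{\mathbf{F}}}^{\Co(X)}=B_{\widehat{\mathbf{F}}}$ with $f(x)=\|x_{\mathbf{F}}\|$ and $f(y)=0$, the quotient property yields a lift $\nu\in\overline{B}_{\mathbf{F}^{**}}$ with $J_{\mathbf{F}}^{**}\nu=f$, and this $\nu$ lies in $x_{\mathbf{F}}^{\parallel}\cap y_{\mathbf{F}}^{\bot}$. Hence the convenient reformulation:
\[
x_{\mathbf{F}}\vdash y_{\mathbf{F}}\ \Longleftrightarrow\ \exists\,f\in\overline{B_{\mathbf{F}}}^{\Co(X)}\ \text{with}\ f(x)=\|x_{\mathbf{F}}\|\ \text{and}\ f(y)=0.
\]

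With this in hand, I would invoke the equicontinuity hypothesis: since $\mathbf{F}$ is $1$-independent we have $\|x_{\mathbf{F}}\|>0$, and equicontinuity at $x$ of the set $\mathcal{E}=\{f\in\overline{B_{\mathbf{F}}}^{\Co(X)}\mid f(x)=\|x_{\mathbf{F}}\|\}$ provides an open neighborhood $U$ of $x$ with $|f(y)-f(x)|<\|x_{\mathbf{F}}\|$ for every $f\in\mathcal{E}$ and every $y\in U$. Thus $f(y)\neq 0$ for all such $f$ and $y$, and the equivalence above gives $x_{\mathbf{F}}\not\vdash y_{\mathbf{F}}$ for every $y\in U$. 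Every $y\in U\setminus\{x\}$ is therefore joined to $x$ in the Birkhoff graph, and $U$ is contained in the closed neighborhood of $x$, as required.

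The main subtlety is the equivalence used in the first step; everything else is then bookkeeping. The non-trivial direction (passing from a function $f\in\overline{B_{\mathbf{F}}}^{\Co(X)}$ back to a norming functional $\nu\in\mathbf{F}^{**}$) is exactly what requires $J_{\mathbf{F}}^{**}$ to be a quotient map onto $\widehat{\mathbf{F}}$, and thus is precisely where the weak compact embedding hypothesis is used; all necessary ingredients are already established in Theorem \ref{barrell2} and the discussion preceding the proposition, so no new tools are needed.
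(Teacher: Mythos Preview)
Your proof is correct and follows essentially the same approach as the paper: both use the characterization $x_{\mathbf{F}}\vdash y_{\mathbf{F}}\Leftrightarrow x_{\mathbf{F}}^{\parallel}\cap y_{\mathbf{F}}^{\bot}\ne\varnothing$, then invoke equicontinuity to exclude zeros of functions in $x_{\mathbf{F}}^{\parallel}$ near $x$. The only cosmetic difference is that the paper works directly in $E=\spa\,\kappa_{\mathbf{F}}(X)$ and applies part (iii) of Theorem~\ref{barrell2} to identify $E^{*}=\widehat{\mathbf{F}}$, so that $x_{\mathbf{F}}^{\parallel}$ \emph{is} the equicontinuous set without any transfer, whereas you compute $x_{\mathbf{F}}^{\parallel}$ in $\mathbf{F}^{**}$ and then push down via $J_{\mathbf{F}}^{**}$; only the forward direction of your equivalence is actually needed.
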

\begin{proof}
Let $E=\spa~\kappa_{\mathbf{F}}\left(X\right)\subset \mathbf{F}^{*}$. Since from Theorem \ref{barrell2} the closed unit ball of $E^{*}$ is $\overline{B_{\mathbf{F}}}^{\Co\left(X\right)}$, it follows that $x_{\mathbf{F}}^{\parallel}=\left\{f\in \overline{B_{\mathbf{F}}}^{\Co\left(X\right)}\left|f\left(x\right)=\|x_{\mathbf{F}}\|\right.\right\}$. Since this set is equicontinuous, there is an open neighborhood $U$ of $x$ such that $\left|f\left(y\right)-f\left(x\right)\right|\le \frac{1}{2}\|x_{\mathbf{F}}\|$, and so $f\left(y\right)\ne 0$, for every $y\in U$ and $f\in x_{\mathbf{F}}^{\parallel}$. Hence, $x_{\mathbf{F}}\not\vdash y_{\mathbf{F}}$, for every $y\in U$, and so $U$ is contained in the closed neighborhood of $x$ in the Birkhoff graph of $\mathbf{F}$.
\end{proof}

\begin{corollary}\label{birkgraph3}
Let $\mathbf{F}$ be a weakly compactly embedded $1$-independent NSCF over $X$. Then every connected $Y\subset X$ is connected in the Birkhoff graph of $\mathbf{F}$, if one of the following conditions is satisfied:
\item[(i)] For any $x\in X$ the set $\left\{f\in \overline{B_{\mathbf{F}}}^{\Co\left(X\right)}\left|f\left(x\right)=\|x_{\mathbf{F}}\|\right.\right\}$ is equicontinuous;
\item[(ii)] For any $x\in X$ the set $\left\{f\in \overline{B_{\mathbf{F}}}^{\Co\left(X\right)}\left|f\left(x\right)=\|x_{\mathbf{F}}\|\right.\right\}$ is finitely dimensional;
\item[(iii)] $X$ is compactly generated, and for any $x\in X$ the set $\left\{f\in \overline{B_{\mathbf{F}}}^{\Co\left(X\right)}\left|f\left(x\right)=\|x_{\mathbf{F}}\|\right.\right\}$ is compact.
\end{corollary}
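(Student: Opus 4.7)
The plan is to unify (i)--(iii) by reducing each to Proposition \ref{birkgraph2}, which asserts that if the set $A_{x}=\{f\in\overline{B_{\mathbf{F}}}^{\Co(X)}:f(x)=\|x_{\mathbf{F}}\|\}$ is equicontinuous, then the closed Birkhoff neighborhood of $x$ contains a topological neighborhood of $x$. I would first extract the connectedness statement from this neighborhood property, and then verify that conditions (ii) and (iii) each force the equicontinuity hypothesis of (i).

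For the passage from Proposition \ref{birkgraph2} to the claim about a connected $Y\subset X$, let $\sim$ denote the minimal equivalence relation on $Y$ containing every pair $(u,v)\in Y\times Y$ with $u_{\mathbf{F}}\not\vdash v_{\mathbf{F}}$, so that the Birkhoff components of $Y$ are precisely the $\sim$-classes. For any $y\in Y$ the neighborhood $U_{y}\subset X$ furnished by Proposition \ref{birkgraph2} satisfies $y_{\mathbf{F}}\not\vdash z_{\mathbf{F}}$ for every $z\in U_{y}$, hence $U_{y}\cap Y\subset[y]_{\sim}$. Thus every $\sim$-class is open in $Y$, and the resulting partition of the connected set $Y$ must be trivial, which is the desired Birkhoff-connectedness.

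For (ii) I would observe that $A_{x}$ is bounded in $\Co(X)$ and contained in a finite-dimensional subspace $V\subset\Co(X)$; since all Hausdorff linear topologies on $V$ agree, the coordinates of elements $f=\sum c_{i}f_{i}\in A_{x}$ in a fixed basis $f_{1},\ldots,f_{n}$ of $V$ are uniformly bounded, and the estimate $|f(y)-f(x)|\le\sum|c_{i}|\,|f_{i}(y)-f_{i}(x)|$ combined with continuity of the $f_{i}$'s yields equicontinuity of $A_{x}$ at every point.

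For (iii) I would consider the map $\Psi\colon X\to\Co(A_{x})$ given by $\Psi(y)(f)=f(y)$, where $\Co(A_{x})$ carries the sup norm; this is well-defined because point evaluations are continuous on the compact set $A_{x}\subset\Co(X)$, and equicontinuity of $A_{x}$ at $x$ is equivalent to continuity of $\Psi$ at $x$. For any compact $K\subset X$ the restriction map $\Co(X)\to\Co(K)$ sends $A_{x}$ to a compact, hence by classical Arzel\`a--Ascoli equicontinuous, subset of $\Co(K)$; this gives $\Psi|_{K}$ continuous, and since $X$ is a k-space it follows that $\Psi$ is continuous. I expect the main subtlety to lie precisely here: promoting compactness in the compact-open topology to pointwise equicontinuity under the sole assumption that $X$ is compactly generated rather than locally compact. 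The k-space recognition principle ``continuous iff continuous on every compact subset'' is what makes this step go through.
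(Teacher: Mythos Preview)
Your proof is correct and follows essentially the same route as the paper: reduce everything to condition (i) via Proposition \ref{birkgraph2}, then show (ii) and (iii) each imply (i). The paper's argument for (iii) simply invokes the Arzela--Ascoli theorem for compactly generated spaces in one line, whereas you unpack this implication explicitly via the map $\Psi$ and the k-space recognition principle; this is exactly the content of that version of Ascoli, so the two arguments coincide in substance. Your treatment of (ii) via a basis and coefficient bounds is a minor variant of the paper's ``bounded finite-dimensional set lies in the convex hull of a finite set'' argument, and your derivation of Birkhoff-connectedness of $Y$ from the local neighborhood property is the same open-partition-of-a-connected-set reasoning the paper uses, just phrased on $Y$ rather than on all of $X$.
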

\begin{proof}
If (i) is satisfied, then the components of the Birkhoff graph of $\mathbf{F}$ are disjoint and open, due to Proposition \ref{birkgraph2}. Hence, every connected subset of $X$ is completely included in one of these components, and so is graph-connected.

At the same time, (iii) implies (i) by virtue of Arzela-Ascoli theorem. Moreover, (ii) also implies (i) since every bounded finitely dimensional set is always equicontinuous. Indeed, such set is contained in a convex hull of a finite set. Since a finite set of functions is always equicontinuous, and a convex hull of an equicontinuous set is equicontinuous, the implication follows.
\end{proof}

Thus, if the conditions of the corollary above are fulfilled and $X$ is connected, the only unitary MO's on $\mathbf{F}$ are the scalar multiples of the identity, by virtue of Corollary \ref{birkgraph}. However, these conditions can be difficult to check, and so it is desirable to find stronger conditions which are more readily verifiable. It turns out that one such condition is of geometric nature. A normed space $F$ is called \emph{nearly strictly convex} if the convex subsets of the unit sphere $\partial B_{F}$ are precompact (i.e. totally bounded) in $F$. Note that if $F$ is a Banach space, then it is nearly strictly convex if and only if the closed convex subsets of $\partial B_{F}$ are compact. It is clear that finitely dimensional normed spaces are nearly strictly convex, as well as strictly- and uniformly convex normed spaces, including Hilbert spaces and taking $L^{p}$ spaces, for $p\in\left(1,+\8\right)$ (see \cite[Definition 7.6, Definition 9.1 and Theorem 9.3]{fhhmz}). Furthermore, a linear subspace of a nearly strictly convex normed space is nearly strictly convex. Also, this class of normed spaces is closed under $l^{p}$ sums, for $p\in\left(1,+\8\right)$ (see \cite{ns} and also Remark \ref{finco}). We can now state our main results.

\begin{theorem}\label{main}
Let $\mathbf{F}$ be a $1$-independent NSCF over a connected compactly generated space $X$. If $\omega:Y\to\C$ is such that $M_{\omega}$ is unitary on $\mathbf{F}$ then $\omega\equiv\lambda$, for some $\lambda\in\T$, provided that one of the following conditions is satisfied:
\item[(i)] $\mathbf{F}$ is compactly embedded;
\item[(ii)] $\mathbf{F}$ is weakly compactly embedded and nearly strictly convex, and $\overline{B}_{\mathbf{F}}$ is closed in $\Co\left(X\right)$;
\item[(iii)] $\mathbf{F}$ is weakly compactly embedded and $\mathbf{F}^{**}$ is nearly strictly convex;
\item[(iv)] $\mathbf{F}$ is reflexive and nearly strictly convex.
\end{theorem}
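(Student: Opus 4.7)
My strategy is to reduce the theorem to Corollaries~\ref{birkgraph} and~\ref{birkgraph3}. By Corollary~\ref{birkgraph} it is enough to show that the Birkhoff graph of $\mathbf{F}$ is connected, and since $X$ is connected and compactly generated, by part (iii) of Corollary~\ref{birkgraph3} it suffices to check that $\mathbf{F}$ is weakly compactly embedded and that for every $x \in X$ the set $x_{\mathbf{F}}^{\parallel} := \left\{f \in \overline{B_{\mathbf{F}}}^{\Co\left(X\right)} : f\left(x\right) = \|x_{\mathbf{F}}\|\right\}$ is compact in $\Co\left(X\right)$. Weak compact embedding holds in all four cases: explicitly in (ii) and (iii), because compact embedding implies weak compact embedding in (i), and by the remark after Theorem~\ref{barrell} (reflexivity implies weak compact embedding) in (iv). Moreover, $x_{\mathbf{F}}^{\parallel}$ is always closed in $\Co\left(X\right)$, being the intersection of $\overline{B_{\mathbf{F}}}^{\Co\left(X\right)}$ with the closed hyperplane $\left\{f : f\left(x\right) = \|x_{\mathbf{F}}\|\right\}$, so by completeness of $\Co\left(X\right)$ it is enough to verify precompactness (total boundedness) of $x_{\mathbf{F}}^{\parallel}$ in $\Co\left(X\right)$ in each case.

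Cases (i), (ii), and (iv) are comparatively direct. Case (i) is immediate: compact embedding means $\overline{B_{\mathbf{F}}}^{\Co\left(X\right)}$ itself is compact in $\Co\left(X\right)$, so $x_{\mathbf{F}}^{\parallel}$ is a closed subset of a compact set. In case (ii), Corollary~\ref{barrell3} identifies $\mathbf{F}$ isometrically with $\widehat{\mathbf{F}} = \left(\spa~\kappa_{\mathbf{F}}\left(X\right)\right)^{*}$, so $\overline{B_{\mathbf{F}}}^{\Co\left(X\right)} = \overline{B}_{\mathbf{F}}$ and $x_{\mathbf{F}}^{\parallel}$ becomes a convex subset of $\partial B_{\mathbf{F}}$ (the standard norming argument forces each of its elements to have norm exactly one). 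Near strict convexity of $\mathbf{F}$ then gives precompactness of $x_{\mathbf{F}}^{\parallel}$ in $\mathbf{F}$, which transfers to $\Co\left(X\right)$ through the continuous inclusion $J_{\mathbf{F}}$. Case (iv) reduces to (iii) since reflexivity yields $\mathbf{F}^{**} \simeq \mathbf{F}$ isometrically, so the near strict convexity of $\mathbf{F}$ is inherited by $\mathbf{F}^{**}$.

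The main work is case (iii), and this is also where I expect the main obstacle to lie. I would consider the auxiliary set $\widetilde{x_{\mathbf{F}}}^{\parallel} := \left\{g \in \overline{B}_{\mathbf{F}^{**}} : \left<x_{\mathbf{F}}, g\right> = \|x_{\mathbf{F}}\|\right\}$ in $\mathbf{F}^{**}$, which is a convex subset of $\partial B_{\mathbf{F}^{**}}$ and hence precompact in $\mathbf{F}^{**}$ by near strict convexity of $\mathbf{F}^{**}$. The decisive step is to identify $J_{\mathbf{F}}^{**}\left(\widetilde{x_{\mathbf{F}}}^{\parallel}\right) = x_{\mathbf{F}}^{\parallel}$, using that $J_{\mathbf{F}}^{**}$ is a quotient map from $\mathbf{F}^{**}$ onto $\widehat{\mathbf{F}}$ (established in the proof of Theorem~\ref{barrell2}) together with the identity $\left[J_{\mathbf{F}}^{**} g\right]\left(x\right) = \left<x_{\mathbf{F}}, g\right>$: the forward inclusion is immediate, while for the reverse any $f \in x_{\mathbf{F}}^{\parallel}$ lifts via the quotient property to some $g \in \overline{B}_{\mathbf{F}^{**}}$ with $J_{\mathbf{F}}^{**} g = f$, and the equality $\left<x_{\mathbf{F}}, g\right> = \|x_{\mathbf{F}}\|$ then forces $\|g\| = 1$, placing $g$ in $\widetilde{x_{\mathbf{F}}}^{\parallel}$. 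Continuity of $J_{\mathbf{F}}^{**} : \mathbf{F}^{**} \to \widehat{\mathbf{F}} \hookrightarrow \Co\left(X\right)$ transports precompactness of $\widetilde{x_{\mathbf{F}}}^{\parallel}$ to precompactness of $x_{\mathbf{F}}^{\parallel}$ in $\Co\left(X\right)$, completing the proof.
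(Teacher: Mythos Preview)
Your proof is correct and follows essentially the same route as the paper's: reduce to Corollaries~\ref{birkgraph} and~\ref{birkgraph3}(iii), then verify in each case that $x_{\mathbf{F}}^{\parallel}$ is compact in $\Co(X)$, handling (i) via compactness of $\overline{B_{\mathbf{F}}}^{\Co(X)}$, (ii) via near strict convexity of $\mathbf{F}$ after identifying $\overline{B_{\mathbf{F}}}^{\Co(X)}=\overline{B}_{\mathbf{F}}$, (iii) by pulling back to the convex set $N_x\subset\partial B_{\mathbf{F}^{**}}$ and pushing forward under $J_{\mathbf{F}}^{**}$, and (iv) as a special case of (iii). Your treatment of the surjectivity $J_{\mathbf{F}}^{**}\bigl(\widetilde{x_{\mathbf{F}}}^{\parallel}\bigr)=x_{\mathbf{F}}^{\parallel}$ in case~(iii) is in fact spelled out more carefully than in the paper, which simply asserts it.
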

\begin{proof}
Let $x\in X$ be arbitrary. In the light of Corollary \ref{birkgraph} and the condition (iii) of Corollary \ref{birkgraph3} it is enough to show that each of the conditions (i)-(iv) imply that the set $L_{x}=\left\{f\in \overline{B_{\mathbf{F}}}^{\Co\left(X\right)}\left|f\left(x\right)=\|x_{\mathbf{F}}\|\right.\right\}$ is compact in $\Co\left(X\right)$.

If (i) holds, then $\overline{B_{\mathbf{F}}}^{\Co\left(X\right)}$ is compact, and so is its closed subset $L_{x}$.

Assume that (ii) holds. Then $\overline{B}_{\mathbf{F}}=\overline{B_{\mathbf{F}}}^{\Co\left(X\right)}$, and so $L_{x}$ is a closed convex subset of $\partial B_{\mathbf{F}}$. Since $\mathbf{F}$ is nearly strictly convex it follows that $L_{x}$ is compact in $\mathbf{F}$. Since the topology of $\mathbf{F}$ is stronger than the compact-open topology, we conclude that $L_{x}$ is compact in $\Co\left(X\right)$.

If (iii) holds, then since $\overline{B_{\mathbf{F}}}^{\Co\left(X\right)}=J_{\mathbf{F}}^{**}\overline{B}_{\mathbf{F}^{**}}$, we have that $L_{x}$ is the image under $J_{\mathbf{F}}^{**}$ of the set $N_{x}=\left\{g\in \overline{B}_{\mathbf{F}^{**}}\left|\left<g,x_{\mathbf{F}}\right>=\|x_{\mathbf{F}}\|\right.\right\}$. Clearly, $N_{x}\subset \partial B_{\mathbf{F}^{**}}$ and is a convex set. Since $\mathbf{F}^{**}$ is nearly strictly convex, it follows that $N_{x}$ is compact. Hence, as $J_{\mathbf{F}}^{**}$ is continuous from $\mathbf{F}^{**}$ into $\Co\left(X\right)$, it follows that $L_{x}$ is also compact.

Finally, observe that (iv) implies (iii). Indeed, every reflexive NSCF is weakly compactly embedded, and if $\mathbf{F}$ is reflexive and nearly strictly convex, then $\mathbf{F}^{**}=\mathbf{F}$ is nearly strictly convex.
\end{proof}

\begin{remark}
Note that the condition (iv) is only imposed on the Banach space properties of $\mathbf{F}$ and has nothing to do with its embedding into $\Co\left(X\right)$.
\qed\end{remark}
\begin{remark}
It is desirable to relax the conditions of the theorem. In fact, at the moment we do not have an example of a non-trivial unitary MO on an either weakly compactly embedded or nearly strictly convex NSCF over a connected compactly generated space.
\qed\end{remark}

The statement can be adjusted to get rid of the $1$-independence.

\begin{proposition}\label{main2}
Let $\mathbf{F}$ be a NSCF over a Hausdorff space $X$ such that the set $\left\{x\in X\left|x_{\mathbf{F}}\ne0_{\mathbf{F}^{*}}\right.\right\}$ is connected and one of the conditions of Theorem \ref{main} are met. Then every unitary MO on $\mathbf{F}$ is a scalar multiple of $Id_{\mathbf{F}}$.
\end{proposition}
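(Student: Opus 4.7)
The plan is to reduce to the $1$-independent situation by restricting attention to the set $X_{0}=\left\{x\in X\left|x_{\mathbf{F}}\ne 0_{\mathbf{F}^{*}}\right.\right\}$, which is assumed connected. Let $\omega:X\to\C$ be such that $M_{\omega}$ is a unitary on $\mathbf{F}$. By Proposition \ref{rec}, $M_{\omega}^{*}x_{\mathbf{F}}=\omega\left(x\right)x_{\mathbf{F}}$ for every $x\in X$, so the values $\omega\left(x\right)$ for $x\in X_{0}$ are uniquely determined, lie in $\T$ (since $M_{\omega}^{*}$ is an isometry), whereas the values of $\omega$ off $X_{0}$ are immaterial for the operator $M_{\omega}$, as observed in Example \ref{obstructions}. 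Consequently it suffices to show that $\omega|_{X_{0}}\equiv\lambda$ for some $\lambda\in\T$: for then $\left[M_{\omega}f\right]\left(x\right)=\lambda f\left(x\right)$ on $X_{0}$, while on $X\backslash X_{0}$ both $\left[M_{\omega}f\right]\left(x\right)$ and $\lambda f\left(x\right)$ vanish, so $M_{\omega}=\lambda Id_{\mathbf{F}}$.

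Observe next that the proof of Proposition \ref{birkgraph0} never actually invokes the $1$-independence of $\mathbf{F}$: it only needs that $y_{\mathbf{F}}\ne 0$ for every $y\in Y$, so the eigenvalue map $\omega$ and the relation $\sim$ are well defined. Taking $Y=X_{0}$ and $T=\left.M_{\omega}^{*}\right|_{\spa~\kappa_{\mathbf{F}}\left(X_{0}\right)}$ (which is a well-defined isometry of this invariant subspace), the conclusion that $\omega|_{X_{0}}$ is constant will follow as soon as we establish that $X_{0}$ is connected in the Birkhoff graph of $\mathbf{F}$.

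For this last point, fix $x\in X_{0}$ and define $L_{x}=\left\{f\in \overline{B_{\mathbf{F}}}^{\Co\left(X\right)}\left|f\left(x\right)=\|x_{\mathbf{F}}\|\right.\right\}$ as in the proof of Theorem \ref{main}. Because $\left\|x_{\mathbf{F}}\right\|>0$, the arguments in that proof go through verbatim under any of the conditions (i)--(iv): in cases (ii) and (iii) the set $L_{x}$ (respectively its preimage $N_{x}$ in $\mathbf{F}^{**}$) sits inside the unit sphere, and near strict convexity plus near-strict convexity of $\mathbf{F}^{**}$ forces compactness, while (i) gives it immediately and (iv) reduces to (iii). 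Since $X$ is compactly generated, Arzela--Ascoli delivers equicontinuity of $L_{x}$, so Proposition \ref{birkgraph2} furnishes an $X$-open neighborhood $U$ of $x$ with $x_{\mathbf{F}}\not\vdash y_{\mathbf{F}}$ for all $y\in U$. By the Hahn--Banach theorem $L_{x}$ is nonempty, so picking any $f\in L_{x}$, the inequality $\left|f\left(y\right)-f\left(x\right)\right|\le\frac{1}{2}\|x_{\mathbf{F}}\|$ on $U$ forces $f\left(y\right)\ne 0$, hence $y\in X_{0}$, for every $y\in U$. Thus $U\subset X_{0}$ is an $X_{0}$-open set contained in the closed Birkhoff-neighborhood of $x$, which shows that the Birkhoff components of $\mathbf{F}$ intersected with $X_{0}$ are open in $X_{0}$; connectedness of $X_{0}$ now forces it to be Birkhoff-connected, completing the argument. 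The only subtlety lies in verifying that the neighborhood produced by Proposition \ref{birkgraph2} lands inside $X_{0}$, so that the $1$-independence ``on $X_{0}$'' suffices to drive the Birkhoff-graph machinery; this is precisely what the nonvanishing of members of $L_{x}$ on $U$ guarantees.
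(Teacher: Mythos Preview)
The paper states Proposition~\ref{main2} without proof, leaving it as an adjustment of Theorem~\ref{main}. Your argument supplies exactly the natural adjustment: restrict to $X_{0}=\left\{x\in X\mid x_{\mathbf{F}}\ne 0_{\mathbf{F}^{*}}\right\}$, observe that the proofs of Proposition~\ref{birkgraph0} and Proposition~\ref{birkgraph2} only use nonvanishing of the relevant point evaluations rather than global $1$-independence, and then run the compactness arguments from the proof of Theorem~\ref{main} verbatim for each $x\in X_{0}$. Your verification that the open set $U$ produced in the proof of Proposition~\ref{birkgraph2} actually lies inside $X_{0}$ (because members of $L_{x}\subset\overline{B_{\mathbf{F}}}^{\Co\left(X\right)}$ do not vanish on $U$, and functions in this closure vanish wherever all of $\mathbf{F}$ does) is the one genuinely new ingredient, and it is correct.

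Two minor remarks. First, the step ``Since $X$ is compactly generated'' imports a hypothesis that Proposition~\ref{main2} does not literally state; this is not your fault, since the phrase ``one of the conditions of Theorem~\ref{main} are met'' is evidently meant to carry the ambient hypotheses (compactly generated $X$) along with (i)--(iv), and the paper's own later use of Proposition~\ref{main2} confirms this reading. Second, you could shorten the argument that $U\subset X_{0}$ by noting that $\kappa_{\mathbf{F}}$ is weak*-continuous (Theorem~\ref{barrell}), so $X_{0}$ is automatically open in $X$; but your direct verification via $L_{x}$ is equally valid and perhaps more self-contained.
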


Let us consider an example of a NSCF over a disconnected space whose Birkhoff graph is connected nonetheless.

\begin{example}
Let $\left(X,d\right)$ and $z\in X$ be as in Example \ref{lip}. Additionally assume that the distances between components of $X$ is less than $1$. We will show that the Birkhoff graph of $\mathbf{F}=Lip\left(X,d\right)$ is connected. Let $x\in X$. Since the distance between components containing $x$ and $z$ is less than $1$, there are $y$ in the component of $x$ and $w$ in the component of $z$ such that $d\left(y,w\right)<1$. Then $\|w_{\mathbf{F}}\|=\max\left\{1,d\left(w,z\right)\right\}> d\left(w,y\right)= \|w_{\mathbf{F}}+\left(-1\right)y_{\mathbf{F}}\|$, and so $w_{\mathbf{F}}\not\vdash y_{\mathbf{F}}$. Due to Corollary \ref{birkgraph3} there are paths from $x$ to $y$ and from $w$ to $z$ in the Birkhoff graph, while $y$ and $w$ are joined with an edge. Hence, there is a path from $x$ to $z$, and since $x$ was chosen arbitrarily, we conclude that the Birkhoff graph of $\mathbf{F}$ is connected. Thus, due to Proposition \ref{birkgraph}, the only unitary MO's on $\mathbf{F}$ are the scalar multiples of the identity.
\qed\end{example}

Similarly to Theorem \ref{main}, we can prove an analogous statement for WCO's.

\begin{proposition}\label{mainw}
Let $\mathbf{F}$ be a $1$-independent NSCF over a Hausdorff space $X$ that satisfies one of the conditions of Theorem \ref{main}. Let $\mathbf{E}$ be a NSCF over a Hausdorff space $Y$. If $\Phi:Y\to X$ is such that $\Phi\left(Y\right)$ is connected, and $\omega,\upsilon:Y\to\C\backslash\left\{0\right\}$ are such that there is a unitary $S:\mathbf{F}\to \mathbf{F}$ such that $W_{\Phi,\omega}=W_{\varphi,\upsilon}S$ (e.g. if both $W_{\Phi,\omega}$ and $W_{\Phi,\upsilon}$ are unitaries), then $\upsilon=\lambda \omega$, for some $\lambda\in\T$.
\end{proposition}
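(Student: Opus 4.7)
The plan is to translate the given factorisation into a statement about eigenvectors of $S^{*}$, and then invoke Proposition \ref{birkgraph0}. I would first take adjoints in $W_{\Phi,\omega}=W_{\Phi,\upsilon}S$ to get $W_{\Phi,\omega}^{*}=S^{*}W_{\Phi,\upsilon}^{*}$, and evaluate both sides at $\kappa_{\mathbf{E}}\left(y\right)$ using Proposition \ref{rec}. This yields
$$\omega\left(y\right)\kappa_{\mathbf{F}}\left(\Phi\left(y\right)\right)=\upsilon\left(y\right)S^{*}\kappa_{\mathbf{F}}\left(\Phi\left(y\right)\right),\qquad y\in Y.$$
Since $\upsilon$ is nowhere zero by hypothesis, this rearranges to
$$S^{*}\kappa_{\mathbf{F}}\left(\Phi\left(y\right)\right)=\frac{\omega\left(y\right)}{\upsilon\left(y\right)}\,\kappa_{\mathbf{F}}\left(\Phi\left(y\right)\right),$$
so every $\kappa_{\mathbf{F}}\left(\Phi\left(y\right)\right)$ is an eigenvector of $S^{*}$.

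Consequently the subspace $E:=\spa~\kappa_{\mathbf{F}}\left(\Phi\left(Y\right)\right)\subset\mathbf{F}^{*}$ is invariant under $S^{*}$. Because $S$ is a unitary on $\mathbf{F}$, the adjoint $S^{*}$ is a unitary on $\mathbf{F}^{*}$, so the restriction $T:=\left.S^{*}\right|_{E}:E\to E$ is a linear isometry for which every $\kappa_{\mathbf{F}}\left(\Phi\left(y\right)\right)$ is an eigenvector. I would then apply Proposition \ref{birkgraph0} to $T$ and to the subset $\Phi\left(Y\right)\subset X$ to conclude $T=\lambda\, Id_{E}$ for some $\lambda\in\T$. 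Combining this with the previous display and using $1$-independence (so that $\kappa_{\mathbf{F}}\left(\Phi\left(y\right)\right)\ne 0$ for every $y$) forces $\omega\left(y\right)/\upsilon\left(y\right)\equiv\lambda$, which is exactly $\upsilon=\overline{\lambda}\omega$ with $\overline{\lambda}\in\T$.

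The only step requiring justification beyond direct bookkeeping is verifying the hypothesis of Proposition \ref{birkgraph0}, namely that $\Phi\left(Y\right)$ is connected in the Birkhoff graph of $\mathbf{F}$. This I expect to be the main (but still routine) obstacle, and it is supplied by the proof of Theorem \ref{main}: that proof establishes that each of the conditions (i)-(iv) forces, for every $x\in X$, the set $\left\{f\in \overline{B_{\mathbf{F}}}^{\Co\left(X\right)}\left|f\left(x\right)=\|x_{\mathbf{F}}\|\right.\right\}$ to be compact in $\Co\left(X\right)$, so Corollary \ref{birkgraph3}(iii) applies and every topologically connected subset of $X$ -- in particular $\Phi\left(Y\right)$ -- is Birkhoff-graph connected. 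The remaining bookkeeping (that $S^{*}$ is a surjective isometry on $\mathbf{F}^{*}$ and that $E$ is $S^{*}$-invariant) follows immediately from the fact that $S$ is a unitary and from the eigenvector identity above, and the parenthetical case in the statement (both $W_{\Phi,\omega}$ and $W_{\Phi,\upsilon}$ unitaries) is recovered by taking $S=W_{\Phi,\upsilon}^{-1}W_{\Phi,\omega}$, with non-vanishing of $\omega,\upsilon$ then furnished by Corollary \ref{winj2}(i).
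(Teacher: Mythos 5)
Your proposal is correct and follows essentially the same route as the paper's (very terse) proof: take adjoints to see that $S^{*}$ is an isometry with $S^{*}\Phi\left(y\right)_{\mathbf{F}}=\frac{\omega\left(y\right)}{\upsilon\left(y\right)}\Phi\left(y\right)_{\mathbf{F}}$, then combine Proposition \ref{birkgraph0} with Corollary \ref{birkgraph3} using the connectedness of $\Phi\left(Y\right)$. Your write-up merely fills in the bookkeeping (invariance of $\spa~\kappa_{\mathbf{F}}\left(\Phi\left(Y\right)\right)$, the compactness argument imported from the proof of Theorem \ref{main}) that the paper leaves implicit.
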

\begin{proof}
First, note that $S^{*}$ is an isometry such that $S^{*}\Phi\left(y\right)_{\mathbf{F}}=\frac{\omega\left(y\right)}{\upsilon\left(y\right)}\Phi\left(y\right)_{\mathbf{F}}$, for every $y\in Y$. Since $\Phi\left(Y\right)$ is connected, the result is obtained by combining Proposition \ref{birkgraph0} with Corollary \ref{birkgraph3}.
\end{proof}

\begin{remark}
It is clear that $\Phi\left(Y\right)$ is connected in the case when $Y$ is connected and $\Phi$ is continuous, and also in the case when $X$ is connected and $\Phi$ is a surjection. Moreover, continuity of $\Phi$ often holds automatically for WCO's between NSCF's (see \cite[Corollary 3.3, Theorem 3.10 and Theorem 3.12]{erz}), while surjectivity of $\Phi$ also can be deduced from the properties of the WCO (see in \cite[Proposition 2.11]{erz}). In fact, if $X$ is a manifold, $\mathbf{F}$ is $2$-independent with $x\to \left\|x_{\mathbf{F}}\right\|$ continuous, $\lim\limits_{\8}\left\|x_{\mathbf{F}}\right\|=+\8$ and bounded functions form a dense subset of $\mathbf{F}$, then $\mathbf{F}$ is rigid in the following stronger sense: if $\Phi:X\to X$ and $\omega,\upsilon:X\to\C$ are such that $W_{\Phi,\omega}$ and $W_{\Phi,\upsilon}$ are unitaries, then $\Phi$ is a self-homeomorphism of $X$ and $\omega=\lambda \upsilon$, for some $\lambda\in\T$, are continuous and non-vanishing.
\qed\end{remark}

Up to this point in this section the word ``unitary'' could be replaced with the word ``co-isometry''. \footnote{An operator between normed spaces is called a \emph{co-isometry} if its adjoint is an isometry.} Note however, that due to part (iii) of Corollary \ref{winj2}, any MO between complete NSCF's, which is a co-isometry is automatically a unitary. Let us conclude the section with a version of Theorem \ref{main} for non-surjective isometries.

\begin{proposition}
Let $\mathbf{F}$ be a $1$-independent NSCF over a Hausdorff space $X$ such that one of the conditions of Theorem \ref{main} are met. Let $\omega:X\to\C\backslash\left\{0\right\}$ be such that $M_{\omega}$ is an isometry on $\mathbf{F}$. Assume that $Y$ is a dense connected subset of $X$ such that for every $x\in Y$ there is $f\in \mathbf{F}$ such that $f\left(x\right)\ne 0$ and $\omega^{-n}f\in \mathbf{F}$, for every $n\in\N$. Then $\omega\equiv\lambda$, for some $\lambda\in\T$.
\end{proposition}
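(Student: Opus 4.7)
The plan is to pass $M_{\omega}$ to a closed subspace of $\mathbf{F}$ on which it becomes a genuine unitary, and then invoke Proposition \ref{main2}. The hypothesis on $Y$ is precisely what is needed to make this subspace large enough to pin down $\omega$.

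Let $\tilde{\mathbf{F}}=\bigcap_{n\in\N}M_{\omega}^{n}\mathbf{F}=\{g\in\mathbf{F}:\omega^{-n}g\in\mathbf{F}\text{ for every }n\in\N\}$. Each $M_{\omega}^{n}\mathbf{F}$ is the range of an isometry on a Banach space, hence closed, so $\tilde{\mathbf{F}}$ is a closed subspace of $\mathbf{F}$. If $g\in\tilde{\mathbf{F}}$ then $\omega^{-n-1}g=\omega^{-n}\left(\omega^{-1}g\right)\in\mathbf{F}$ for every $n$, so $\omega^{-1}g\in\tilde{\mathbf{F}}$; thus $M_{\omega^{-1}}$ inverts $M_{\omega}|_{\tilde{\mathbf{F}}}$ and the latter is a surjective isometry, i.e., a unitary on $\tilde{\mathbf{F}}$. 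I would next verify that $\tilde{\mathbf{F}}$, viewed as a NSCF over the same $X$, inherits whichever of the four conditions of Theorem \ref{main} holds for $\mathbf{F}$: the (weak) compactness of the embedding transfers because $B_{\tilde{\mathbf{F}}}\subset B_{\mathbf{F}}$, near strict convexity and reflexivity pass to closed subspaces, and the condition on $\mathbf{F}^{**}$ descends because $\tilde{\mathbf{F}}^{**}$ embeds canonically into $\mathbf{F}^{**}$. The only non-automatic step is the closedness clause in (ii): if a net $\{f_{\alpha}\}\subset\overline{B}_{\tilde{\mathbf{F}}}$ converges in $\Co\left(X\right)$ to $f$, then, since $\omega$ is continuous by Proposition \ref{hc}(i) and $M_{\omega^{-n}}$ is an isometry on $\tilde{\mathbf{F}}$, the net $\{\omega^{-n}f_{\alpha}\}\subset\overline{B}_{\mathbf{F}}$ converges to $\omega^{-n}f$ in $\Co\left(X\right)$ for each $n$; closedness of $\overline{B}_{\mathbf{F}}$ in $\Co\left(X\right)$ forces $\omega^{-n}f\in\mathbf{F}$, so $f\in\tilde{\mathbf{F}}$ and $\overline{B}_{\tilde{\mathbf{F}}}$ is closed in $\Co\left(X\right)$.

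Set $Z=\{x\in X:x_{\tilde{\mathbf{F}}}\ne 0\}=\bigcup_{g\in\tilde{\mathbf{F}}}g^{-1}\left(\C\setminus\{0\}\right)$, an open subset of $X$. The hypothesis provides, for each $y\in Y$, an $f\in\tilde{\mathbf{F}}$ with $f\left(y\right)\ne 0$, so $Y\subset Z$. Since $Y$ is dense in $X$ it is dense in the open subspace $Z$, and because any space containing a dense connected subset is connected, $Z$ itself is connected. Proposition \ref{main2} now applies to the NSCF $\tilde{\mathbf{F}}$ over $X$ (the appropriate condition of Theorem \ref{main} holds, and $\{x:x_{\tilde{\mathbf{F}}}\ne 0\}=Z$ is connected), and to the unitary $M_{\omega}|_{\tilde{\mathbf{F}}}$, yielding $M_{\omega}|_{\tilde{\mathbf{F}}}=\lambda\,Id_{\tilde{\mathbf{F}}}$ for some $\lambda\in\T$. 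This is exactly the statement $\omega\equiv\lambda$ on $Z$. Continuity of $\omega$ on $X$ together with the density of $Z\supset Y$ in $X$ then upgrades the equality to $\omega\equiv\lambda$ on all of $X$.

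The main obstacle is the closedness verification in condition (ii); the rest is bookkeeping that packages the hypothesis on $Y$ so that $Z\supset Y$ is connected, letting the already-proved Proposition \ref{main2} do the heavy lifting.
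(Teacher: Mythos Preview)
Your proof is correct and follows essentially the same route as the paper: pass to the closed subspace $\tilde{\mathbf{F}}=\bigcap_{n}M_{\omega}^{n}\mathbf{F}$, verify it inherits the relevant condition of Theorem \ref{main}, observe that $M_{\omega}$ is unitary there, check that the set $Z$ of nonvanishing point evaluations contains $Y$ and is connected, apply Proposition \ref{main2}, and extend by continuity. Your treatment is in fact a bit more explicit than the paper's in two places: you spell out why $M_{\omega}$ is invertible on $\tilde{\mathbf{F}}$, and you justify the connectedness of $Z$ by noting it is open and contains the dense connected set $Y$ (the paper simply asserts ``Hence, $Z$ is connected'').
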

\begin{proof}
First, note that $\omega$ is continuous by virtue of part (i) of Proposition \ref{hc}, and since it does not vanish, $\frac{1}{\omega}$ is also continuous.

Let $\mathbf{E}=\bigcap\limits_{n\in\N}M_{\omega}^{n}\mathbf{F}$, which is a closed subspace of $\mathbf{F}$. Then $\mathbf{E}$ is a NSCF over $X$ that satisfies conditions of Theorem \ref{main}. Indeed, a closed subspace of a (weakly) compactly embedded NSCF is also a a (weakly) compactly embedded NSCF; a closed subspace of nearly strictly convex or a reflexive normed space is nearly strictly convex or reflexive; if $\mathbf{F}^{**}$ is nearly strictly convex, then so is $\mathbf{E}^{**}\subset\mathbf{F}^{**}$; finally, if $\overline{B}_{\mathbf{F}}$ is closed in $\Co\left(X\right)$, then so is $\overline{B}_{\mathbf{E}}=\bigcap\limits_{n\in\N}M_{\omega}^{n}\overline{B}_{\mathbf{F}}$, as $M_{\omega}$ is a self-homeomorphism of $\Co\left(X\right)$.

The set $Z=\left\{x\in X\left|x_{\mathbf{E}}\ne0_{\mathbf{E}^{*}}\right.\right\}$ contains $Y$. Indeed, for every $x\in Y$ there is $f\in \mathbf{F}$ such that $f\left(x\right)\ne 0$ and $f\in M_{\omega}^{n}\mathbf{F}$, for every $n\in\N$. Hence, $Z$ is connected and dense in $X$. Since $M_{\omega}$ is a unitary on $\mathbf{E}$, by Proposition \ref{main2}, it follows that $\omega$ is a constant function on $Z$. As $Z$ is dense in $X$ and $\omega$ is continuous, the result follows.
\end{proof}

\section{More on geometry of normed spaces}\label{boo}

In this section we gather some leftover results and remarks that are not directly related to NSCF's, and instead are given in the context of abstract normed spaces. Let us start by revisiting one of intuitive aspects of the orthogonality in the inner product spaces. Namely, one can view orthogonal vectors as ``separated''. More precisely, for any $e\ne 0_{E}$ in a Hilbert space $E$, $e^{\bot}$ is a hyperplane, which is a closed convex (and so weakly closed) set not containing $e$. It is natural to ask whether the same phenomenon holds in general normed spaces.

As was already mentioned, the relation $\vdash$ of Birkhoff orthogonality is not symmetric in general normed spaces. Hence, if $E$ is a normed space and $e\ne 0_{E}$, we can consider distinct orthogonal complements $e^{\vdash}=\left\{f\in E\left|e\vdash f\right.\right\}$ and $^{\vdash}e=\left\{f\in E\left|f\vdash e\right.\right\}$. From the characterization of Birkhoff orthogonality, $^{\vdash}e$ is the set of all maximal elements of functionals in $e^{\bot}\subset E^{*}$, while $e^{\vdash}=\bigcup\limits_{\nu\in e^{\parallel}}\nu^{\bot}$. It is easy to see that the set $\left\{\left(e,f\right)\in E\times E\left|e\vdash f\right.\right\}$ is norm-closed in $E\times E$, and so both $e^{\vdash}$ and $^{\vdash}e$ are closed with respect to the norm topology on $E$. However we cannot immediately conclude that these sets are weakly closed since they are usually not convex. More specifically, $e^{\vdash}$ is a union of hyperplanes. It turns out that the key factor in the question of when this set is weakly convex is how ``many'' hyperplanes are involved.

\begin{theorem}\label{bort}
For a nonzero vector $e\in E$ the following are equivalent:
\item[(i)] $e^{\vdash}$ is weakly closed;
\item[(ii)] $e$ is weakly separated from $e^{\vdash}$ (i.e. $e$ does not belongs to the weak closure of $e^{\vdash}$);
\item[(iii)] $e^{\vdash}$ is not weakly dense in $E$;
\item[(iv)] The set $e^{\parallel}$ is of finite-dimension.
\end{theorem}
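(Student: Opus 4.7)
My plan is to close the cycle (i) $\Rightarrow$ (ii) $\Rightarrow$ (iii) $\Rightarrow$ (iv) $\Rightarrow$ (i). The first two links should be essentially immediate. Note that $e \notin e^{\vdash}$, since taking $t=-1$ in the defining inequality $\|e\| \le \|e+te\|$ yields $\|e\| \le 0$, impossible for $e \ne 0$. So if $e^{\vdash}$ is weakly closed, then $e$ has a weak neighborhood disjoint from it, giving (i)$\Rightarrow$(ii); and (ii)$\Rightarrow$(iii) is just the contrapositive of the fact that a weakly dense set's weak closure is all of $E$.

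For (iv)$\Rightarrow$(i), I would exploit finite dimensionality of $F := \spa(e^{\parallel}) \subset E^{*}$. The bounded linear map $T : E \to F^{*}$ defined by $(Tf)(\nu) = \nu(f)$ is weakly continuous (its range is finite-dimensional, where all Hausdorff linear topologies coincide). The description $f \in e^{\vdash}$ iff $\nu(f)=0$ for some $\nu \in e^{\parallel}$ then rewrites as $e^{\vdash} = T^{-1}(A)$, where $A = \{\phi \in F^{*} : \phi \text{ vanishes at some } \nu \in e^{\parallel}\}$. Since $e^{\parallel}$ is norm-closed, bounded, and contained in the finite-dimensional $F$, it is compact; combining this compactness with the joint continuity of the duality pairing $F^{*} \times F \to \C$ shows $A$ is closed in $F^{*}$. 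Pulling back by the weakly continuous $T$ yields (i).

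The substantive step is (iii)$\Rightarrow$(iv), which I will prove contrapositively: assuming $\spa(e^{\parallel})$ is infinite-dimensional, I will show $e^{\vdash}$ meets every basic weak-open set. Fix any such set $U = \{f : |\phi_{i}(f-f_{0})| < \varepsilon, \, i = 1,\ldots, n\}$. Infinite-dimensionality produces some $\nu \in e^{\parallel} \setminus \spa\{\phi_{1},\ldots,\phi_{n}\}$. By the standard Hahn--Banach consequence that a functional vanishes on $\bigcap_{i} \ker \phi_{i}$ iff it lies in $\spa\{\phi_{i}\}$, one obtains $h \in \bigcap_{i} \ker \phi_{i}$ with $\nu(h) \ne 0$. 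Setting $f := f_{0} - \frac{\nu(f_{0})}{\nu(h)}\, h$ then gives $\nu(f) = 0$, so $f \in \ker \nu \subset e^{\vdash}$, while $\phi_{i}(f) = \phi_{i}(f_{0})$, so $f \in U$.

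The main obstacle I anticipate is the algebraic translation in (iii)$\Rightarrow$(iv) between the geometric hypothesis (infinite-dimensionality of $e^{\parallel}$) and the concrete selection of a $\nu$ that escapes any prescribed finite list of functionals; once this reduction is identified the rest is elementary linear algebra. It is worth noting that nothing in the argument requires $E$ to be complete, so the equivalence holds for arbitrary normed spaces.
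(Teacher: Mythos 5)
Your proof is correct and follows essentially the same route as the paper's: both rest on the characterization $f\in e^{\vdash}\Leftrightarrow \nu\left(f\right)=0$ for some $\nu\in e^{\parallel}$, establish (iii)$\Leftrightarrow$(iv) by the same annihilator/span duality for finitely many functionals, and derive (iv)$\Rightarrow$(i) from boundedness plus finite-dimensionality (hence compactness) of $e^{\parallel}$. The only difference is one of packaging: where you realize $e^{\vdash}$ as $T^{-1}\left(A\right)$ for a weakly continuous finite-rank map $T$ and a closed set $A$, the paper directly exhibits, for each $g\notin e^{\vdash}$, a weak neighborhood $\left\{f\left|\,\left|\left<f-g,\nu_{j}\right>\right|<\delta\right.\right\}$ disjoint from $e^{\vdash}$, using $e^{\parallel}\subset\conv\left\{\nu_{1},\dots,\nu_{n}\right\}$.
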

\begin{proof}
(i)$\Rightarrow$(ii)$\Rightarrow$(iii) is trivial. Let us prove (iii)$\Rightarrow$(iv): Assume, there are $g\in E$ and $\left\{\nu_{1},..., \nu_{n}\right\}\subset E^{*}$, such that $V=\left\{f\in E \left|\forall j\in\overline{1,n}~\left|\left<f-g,\nu_{j}\right>\right|<1\right.\right\}$ does not intersect $e^{\vdash}$. Take a nonzero $f\in \left\{\nu_{1},..., \nu_{n}\right\}^{\bot}$. Then $\left<g+tf-g,\nu_{j}\right>=0$, for all $j\in\overline{1,n}$, and so $g+tf\in V$, for any $t\in \C$. For any $\nu\in e^{\parallel}$ we have that $\left<g+tf,\nu\right>=\left<g,\nu\right>+t\left<f,\nu\right>$. If $\left<f,\nu\right>\ne 0$, for $t=-\frac{\left<g,\nu\right>}{\left<f,\nu\right>}$ we have that $\left<g+tf,\nu\right>=0$, which contradicts the assumption $V\bigcap e^{\vdash}=\varnothing$. Hence $\nu\in f^{\bot}$, and from the arbitrariness of $f$ and $\nu$, we get that $e^{\parallel}\subset \left(\left\{\nu_{1},..., \nu_{n}\right\}^{\bot}\right)^{\bot}=\spa \left\{\nu_{1},..., \nu_{n}\right\}$.\medskip

(iv)$\Rightarrow$(i): Assume that $e^{\parallel}$ is finite-dimensional. Since this set is bounded, there is a finite collection $D=\left\{\nu_{1}, \nu_{2},..., \nu_{n}\right\}\subset E^{*}$, such that $e^{\parallel}\subset \conv D$. Let $g\not\in e^{\vdash}$. Then $\left<g,\nu\right>\ne 0$, for any $\nu\in e^{\parallel}$, and due to weak* compactness of $D$ and continuity of $g$ as a functional on $D$, there is $\delta>0$ such that $\left|\left<g,\nu\right>\right|\ge \delta$, for any $\nu\in e^{\parallel}$. The set $U=\left\{f\in E\left|\forall j\in\overline{1,n}~\left|\left<f-g,\nu_{j}\right>\right|<\delta\right.\right\}$ is a weakly open neighborhood of $g$, which is disjoint from $e^{\vdash}$. Indeed, for any $\nu\in e^{\parallel}$ there are $t_{1},...,t_{n}$, such that $\sum\limits_{j=1}^{n}t_{j}=1$ and $\nu=\sum\limits_{j=1}^{n}t_{j}\nu_{j}$. Then for any $f\in U$ we have that
\begin{align*}
\left|\left<f,\nu\right>\right|&=\left|\left<g,\nu\right>+\left<f-g,\nu\right>\right|=\left|\left<g,\nu\right>+\sum\limits_{j=1}^{n}t_{j}\left<f-g,\nu_{j}\right>\right|\\
&\ge \left|\left<g,\nu\right>\right|-\sum\limits_{j=1}^{n}t_{j}\left|\left<f-g,\nu_{j}\right>\right|>\delta-\sum\limits_{j=1}^{n}t_{j}\delta=0,
\end{align*}
and so $f\not\in e^{\vdash}$. Thus, $g$ is weakly separated from $e^{\vdash}$, and since $g$ was chosen arbitrarily we conclude that $e^{\vdash}$ is weakly closed.
\end{proof}

\begin{remark}
It would also be interesting to find necessary and sufficient conditions for weak closeness $^{\vdash}e$.\medskip
\end{remark}

Let us now state an interpretation of Theorem \ref{main} in the context of abstract normed spaces.

\begin{theorem}
Let $E$ be a normed space and let $T:E\to E$ be an isometry. Let $D\subset E\backslash\left\{0_{E}\right\}$ consist of eigenvectors of $T$ such that $\overline{\spa~ D}=E$. Then $T=\lambda Id_{E}$, for some $\lambda\in\T$ provided that one of the following conditions is satisfied:
\item[(i)] $D$ is connected in the norm topology;
\item[(ii)] $D$ is weakly connected and for each $e\in D$ the set $\left\{\nu\in \overline{B}_{E^{*}}, \left<e,\nu\right>=\|e\|\right\}$ is of finite dimension;
\item[(iii)] $D$ is bounded and weakly connected and $E^{*}$ is separable and nearly strictly convex.
\end{theorem}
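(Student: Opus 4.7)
The plan is to follow the pattern of Proposition \ref{birkgraph0}. I define $\omega:D\to\T$ by $Te=\omega(e)e$ for $e\in D$; the values lie in $\T$ because $T$ is an isometry. The goal is to prove $\omega$ is constant on $D$: once $\omega\equiv\lambda$, $T$ agrees with $\lambda Id_{E}$ on $D$, hence on $\spa\,D$, and hence on $\overline{\spa\,D}=E$ by continuity.

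Let $\sim$ be the equivalence relation on $D$ given by $e\sim f\Leftrightarrow\omega(e)=\omega(f)$. By Lemma \ref{isoort}, $\omega(e)\ne\omega(f)$ forces $e\vdash f$; equivalently, for every $e\in D$ the class $[e]$ contains $D\cap\left(E\backslash e^{\vdash}\right)$. Since $e\not\vdash e$ (take $t=-1$ in the definition), $e$ itself lies in $E\backslash e^{\vdash}$, and running this observation at each point of $[e]$ shows that $[e]$ is $\tau$-open in $D$ in any topology $\tau$ on $E$ for which every $e^{\vdash}$ is $\tau$-closed. Then $D$ is partitioned into $\tau$-open equivalence classes, and $\tau$-connectedness of $D$ forces exactly one class.

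For (i): $\vdash$ is a norm-closed relation on $E\times E$, so each $e^{\vdash}$ is norm-closed and the norm topology suffices. For (ii): finite-dimensionality of $e^{\parallel}$ triggers Theorem \ref{bort}, yielding weak-closedness of $e^{\vdash}$, so the weak topology suffices.

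The main obstacle is (iii), where $e^{\vdash}$ need not be weakly closed. My plan is to instead show that $f^{\vdash}\cap D$ is weakly closed in $D$ for each $f\in D$. Separability of $E^{*}$ makes the weak topology on the bounded set $D$ metrizable, so it suffices to verify that $f^{\vdash}$ is weakly sequentially closed. If $g_{n}\in f^{\vdash}$ with $g_{n}\rightharpoonup g$, pick $\nu_{n}\in f^{\parallel}$ with $\left<g_{n},\nu_{n}\right>=0$; the set $f^{\parallel}$ is closed, convex, and contained in $\partial B_{E^{*}}$, so near strict convexity of the Banach space $E^{*}$ makes $f^{\parallel}$ norm-compact, and a subsequence satisfies $\nu_{n}\to\nu\in f^{\parallel}$ in norm. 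The triangle estimate
\[
\left|\left<g,\nu\right>\right|\le\left|\left<g-g_{n},\nu\right>\right|+\left\|g_{n}\right\|\left\|\nu-\nu_{n}\right\|+\left|\left<g_{n},\nu_{n}\right>\right|,
\]
together with norm-boundedness of weakly convergent sequences, forces $\left<g,\nu\right>=0$, so $g\in\nu^{\bot}\subset f^{\vdash}$. The delicate step I expect to have to justify carefully is this metrizability-plus-sequential-closedness transfer, which is precisely where both boundedness of $D$ and separability of $E^{*}$ are used in tandem.
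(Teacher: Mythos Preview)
Your argument is correct. For parts (i) and (ii) your route is essentially the same as the paper's: both reduce to showing that each equivalence class of $\sim$ is open in $D$ in the relevant topology, via norm-closedness of $e^{\vdash}$ (for (i)) and Theorem~\ref{bort} (for (ii)), and then invoke connectedness.

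For part (iii) the approaches diverge. The paper recasts $E^{*}$ as a NSCF over $\left(D,\text{weak}\right)$: since $\overline{\spa\,D}=E$, Corollary~\ref{barrell3} gives that $E^{*}$ is weakly compactly embedded with $\overline{B}_{E^{*}}$ closed in $\Co\left(D\right)$; separability of $E^{*}$ and boundedness of $D$ make $\left(D,\text{weak}\right)$ metrizable; then condition (ii) of Theorem~\ref{main} applies, the Birkhoff graph of $E^{*}$ is connected, and Proposition~\ref{birkgraph0} finishes. Your argument bypasses the NSCF machinery entirely: you exploit the same metrizability to reduce relative weak closedness of $f^{\vdash}\cap D$ in $D$ to sequential closedness, and then use norm-compactness of $f^{\parallel}$ (from near strict convexity of the complete space $E^{*}$) to extract a norm-convergent subsequence of witnessing functionals $\nu_{n}$ and pass to the limit in $\left<g_{n},\nu_{n}\right>=0$. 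Unwinding the paper's route through Corollary~\ref{birkgraph3} and Proposition~\ref{birkgraph2}, one sees that it ultimately also rests on compactness of $f^{\parallel}$, but channels it through Arzela--Ascoli and equicontinuity rather than a direct subsequence argument. Your version is more elementary and self-contained; the paper's has the virtue of reducing everything to the main theorem already established for NSCF's.
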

\begin{proof}
We will only show the sufficiency of (iii). The sufficiency of (i) and (ii) is shown similarly. We can view elements of $E^{*}$ as a NSCF over $\left(D,weak\right)$. Since $\overline{\spa~ \kappa_{E^{*}}}=E$, it follows from Corollary \ref{barrell3} that $E^{*}$ is a weakly compactly embedded, and also that $\overline{B}_{E^{*}}$ is closed in $\Co\left(D\right)$.

Since $E^{*}$ is separable it follows that a bounded set $D$ is weakly metrizable (see \cite[Proposition 3.106]{fhhmz}). Hence, $E^{*}$ is a nearly strictly convex NSCF over a connected metrizable space $D$. Thus, $E^{*}$ satisfies the condition (i) of Theorem \ref{main}, and so its Birkhoff graph is connected. By virtue of Proposition \ref{birkgraph0} we conclude that $T$ is a constant multiple of the identity.
\end{proof}

Let us conclude the article with discussing nearly strictly convex normed spaces. A lot of facts about strictly convex normed spaces have analogues for the nearly strictly convex case. For example, it is easy to see that if $T$ is a linear map from a nearly strictly convex normed space $E$ into a normed space $F$ such that $T\overline{B}_{E}=\overline{B}_{F}$, then $F$ is also nearly strictly convex. Consequently, if $H$ is a subspace of $E$ which is a reflexive Banach space, then $E\slash H$ is nearly strictly convex (for the proof of the fact that the quotient map maps $\overline{B}_{E}$ onto $\overline{B}_{E\slash H}$ see the proof of \cite[Theorem 2.2.5]{istr}). Note that reflexivity of $H$ is essential since any Banach space can be obtained as a quotient of a strictly convex space (see \cite[Theorem 2.2.7]{istr}). Now let us discuss when the sum of nearly strictly convex normed spaces is nearly strictly convex. We start with a finite sum (we omit the proof in favour of the infinite case).

\begin{proposition}
Let $\rho$ be a strictly convex norm on $\R^{n}$ which is invariant with respect to the the reflection over the coordinate hyperplanes. Let $E_{1},...,E_{n}$ be nearly strictly convex normed spaces. Then $E_{1}\times ...\times E_{n}$ is nearly strictly convex with respect to the norm $\left\|\left(e_{1},...,e_{n}\right)\right\|=\rho\left(\|e_{1}\|_{E_{1}},...,\|e_{n}\|_{E_{n}}\right)$, $\left(e_{1},...,e_{n}\right)\in E_{1}\times ...\times E_{n}$.
\end{proposition}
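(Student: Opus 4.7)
The plan is as follows. Set $E:=E_{1}\times\ldots\times E_{n}$ with the indicated norm, and let $C\subset \partial B_{E}$ be a convex set whose precompactness we wish to establish. The first step is to record a geometric property of $\rho$ that follows from combining strict convexity with invariance under the coordinate reflections: $\rho$ is \emph{strictly monotonic} on the nonnegative orthant, i.e. if $0\le a_{i}\le b_{i}$ for every $i$ with at least one strict inequality, then $\rho(a_{1},\ldots,a_{n})<\rho(b_{1},\ldots,b_{n})$. To see this, one fixes all but the $i$-th coordinate and writes
\[
(a_{1},\ldots,a_{i},\ldots,a_{n})=\tfrac{1}{2}\bigl(1+\tfrac{a_{i}}{b_{i}}\bigr)(a_{1},\ldots,b_{i},\ldots,a_{n})+\tfrac{1}{2}\bigl(1-\tfrac{a_{i}}{b_{i}}\bigr)(a_{1},\ldots,-b_{i},\ldots,a_{n}),
\]
a nontrivial convex combination of two distinct points of equal $\rho$-norm (by the reflection invariance); strict convexity then gives the strict inequality, and iterating in $i$ yields the claim.

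Next I will show that the vector $(\|e_{1}\|_{E_{1}},\ldots,\|e_{n}\|_{E_{n}})$ is the same $(r_{1},\ldots,r_{n})$ for every $(e_{1},\ldots,e_{n})\in C$. Given $(e_{i}),(f_{i})\in C$ and $t\in[0,1]$, the convex combination lies in $C$, so by the triangle inequality in each $E_{i}$ and the monotonicity of $\rho$,
\[
1=\rho\bigl(\|te_{i}+(1-t)f_{i}\|_{E_{i}}\bigr)_{i}\le \rho\bigl(t\|e_{i}\|_{E_{i}}+(1-t)\|f_{i}\|_{E_{i}}\bigr)_{i}\le 1,
\]
forcing equality throughout. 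The second equality together with the strict convexity of $\rho$ and the fact that both $(\|e_{i}\|_{E_{i}})_{i}$ and $(\|f_{i}\|_{E_{i}})_{i}$ lie on the $\rho$-unit sphere implies they coincide; call this common vector $(r_{1},\ldots,r_{n})$. The first equality, together with strict monotonicity of $\rho$, forces $\|te_{i}+(1-t)f_{i}\|_{E_{i}}=r_{i}$ for every $i$.

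It follows that the projection $C_{i}$ of $C$ onto $E_{i}$ is a convex subset of the sphere of radius $r_{i}$ in $E_{i}$. If $r_{i}=0$ then $C_{i}=\{0\}$; otherwise $r_{i}^{-1}C_{i}$ is a convex subset of $\partial B_{E_{i}}$ and hence precompact by the near strict convexity of $E_{i}$, so $C_{i}$ itself is precompact. Since $C\subset C_{1}\times\ldots\times C_{n}$ and a finite product of precompact sets is precompact, $C$ is precompact as well, completing the proof.

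The only real obstacle is the derivation of strict monotonicity of $\rho$ from the two hypotheses; once this is in hand, the triangle inequality plus the equality case of strict convexity pins down both the profile $(r_{i})$ and the radial behavior of convex combinations inside $C$, and the conclusion reduces to coordinate-wise near strict convexity of the factors.
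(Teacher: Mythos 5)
Your proof is correct, and it follows the same outline as the paper's argument for the infinite-sum version (Proposition 4.5), which the paper proves in place of this finite case: project the convex subset of the sphere onto each factor, show each projection is a convex subset of a sphere of some radius $r_{i}$, invoke near strict convexity of each $E_{i}$, and conclude via precompactness of a finite product. The one substantive addition you make is to actually derive the strict monotonicity of $\rho$ on the nonnegative orthant from strict convexity plus reflection invariance (the paper instead postulates monotonicity and strict subadditivity as axioms in the infinite case and cites Istr\u{a}\c{t}escu for the ``projections land in spheres'' step); your reflection-averaging argument for that lemma is valid, and the rest checks out.
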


The analogous statement for the case of the infinite sum is more involved.

\begin{proposition}\label{finco}
Let $\rho:\left[0,+\8\right)^{\N}\to \left[0,+\8\right]$ be a functional that satisfies the following conditions:
\begin{itemize}
\item $\rho\left(0_{\R^{\N}}\right)=0$; $\rho\left(\left\{0,...,0,1,0,...\right\}\right)<+\8$;
\item Positive homogeneity: $\rho\left(\lambda u\right)=\lambda\rho\left(u\right)$, for any $u\in \left[0,+\8\right)^{\N}$ and $\lambda>0$;
\item Strict subadditivity: $\rho\left(u+v\right)\le\rho\left(u\right)+\rho\left(v\right)$, for any $u,v\in \left[0,+\8\right)^{\N}$; if $\rho\left(u+v\right)=\rho\left(u\right)+\rho\left(v\right)$ then either $v=\lambda u$, for some $\lambda\ge 0$, or $u=0_{\R^{\N}}$;
\item Monotonicity: $\rho\left(u+v\right)\ge\rho\left(u\right)$, for any $u,v\in \left[0,+\8\right)^{\N}$;
\item Absolute continuity: if $\rho\left(\left\{u_{n}\right\}_{n\in\N}\right)<+\8$, for some $\left\{u_{n}\right\}_{n\in\N}\in \left[0,+\8\right)^{\N}$, then $\rho\left(\left\{0,...,0,u_{n},u_{n+1},...\right\}\right)\to 0$, $n\to\8$.
\end{itemize}
Let $\left\{E_{n}\right\}_{n\in\N}$ be a sequence of nearly strictly convex normed spaces. Define $\|\cdot\|:\prod\limits_{n\in\N}E_{n}\to \left[0,+\8\right]$ by $\left\|\left\{e_{n}\right\}_{n\in\N}\right\|=\rho\left(\left\{\|e_{n}\|_{E_{n}}\right\}_{n\in\N}\right)$. Then $E=\left\{e\in\prod\limits_{n\in\N}E_{n},~ \|e\|<+\8 \right\}$ with the norm $\|\cdot\|$ is a nearly strictly convex normed space.
\end{proposition}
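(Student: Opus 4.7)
The plan is first to verify that $\|\cdot\|$ is a genuine norm on $E$: absolute homogeneity and the triangle inequality come directly from positive homogeneity and subadditivity of $\rho$, while non-degeneracy follows from monotonicity together with the fact that the scalars $c_{j}:=\rho(\{0,\ldots,0,1,0,\ldots\}_{j})$ are all strictly positive and finite (otherwise the $j$-th coordinate would contribute nothing and $\|\cdot\|$ would fail to separate points). After this, by definition of near strict convexity it suffices to show that every convex subset $C\subset\partial B_{E}$ is precompact.

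The key step exploits strict subadditivity of $\rho$ to force rigidity of the coordinate norms along $C$. Fix $e,f\in C$; convexity gives $\tfrac{e+f}{2}\in C$, and the triangle inequality in each $E_{n}$ combined with monotonicity of $\rho$ yields
\[1=\left\|\tfrac{e+f}{2}\right\|=\rho\!\left(\left\{\left\|\tfrac{e_{n}+f_{n}}{2}\right\|\right\}_{n}\right)\le \rho\!\left(\left\{\tfrac{\|e_{n}\|+\|f_{n}\|}{2}\right\}_{n}\right)=\tfrac{1}{2}\rho\bigl(\{\|e_{n}\|\}_{n}+\{\|f_{n}\|\}_{n}\bigr)\le 1.\]
The last inequality is thus an equality, and strict subadditivity applied to $u=\{\|e_{n}\|\}_{n}$ and $v=\{\|f_{n}\|\}_{n}$ forces $v=\lambda u$ for some $\lambda\ge 0$ (the alternative $u=0$ is excluded since $\|e\|=1$); applying $\rho$ to both sides then gives $\lambda=1$. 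Hence the coordinate profile $a_{n}:=\|P_{n}e\|_{E_{n}}$ is independent of the choice of $e\in C$.

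Let $P_{n}:E\to E_{n}$ be the $n$-th coordinate projection. Then $P_{n}C$ is a convex subset of $a_{n}\partial B_{E_{n}}$ (a singleton when $a_{n}=0$), so near strict convexity of $E_{n}$ gives precompactness of $P_{n}C$ in $E_{n}$. To upgrade this to precompactness of $C$ in $(E,\|\cdot\|)$, fix $\varepsilon>0$ and invoke the absolute continuity of $\rho$ on the admissible profile $\{a_{n}\}$: there is an $N\in\N$ with $\rho(\{0,\ldots,0,a_{N+1},a_{N+2},\ldots\})<\varepsilon/2$. Writing $Q_{N}e=(e_{1},\ldots,e_{N},0,0,\ldots)$, we have $\|e-Q_{N}e\|<\varepsilon/2$ for every $e\in C$, so $C$ lies in the $\varepsilon/2$-neighbourhood of $Q_{N}C$. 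On the truncated product $E_{1}\times\cdots\times E_{N}$ the $\rho$-norm induces the product topology, because monotonicity gives $c_{j}\|e_{j}\|_{E_{j}}\le\|Q_{N}e\|$ while subadditivity gives $\|Q_{N}e\|\le\sum_{j=1}^{N}c_{j}\|e_{j}\|_{E_{j}}$; consequently $Q_{N}C\subset\overline{P_{1}C}\times\cdots\times\overline{P_{N}C}$ is precompact in $(E,\|\cdot\|)$, and a finite $\varepsilon/2$-net for $Q_{N}C$ produces a finite $\varepsilon$-net for $C$.

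The main obstacle is the rigidity step: extracting the common profile $\{a_{n}\}$ from strict subadditivity. Once this is in hand, near strict convexity of each $E_{n}$ handles the individual slices, while absolute continuity of $\rho$ pushes the tail contribution below any prescribed threshold, so the finite-coordinate truncations suffice to build an $\varepsilon$-net.
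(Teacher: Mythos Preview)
Your argument is correct and follows essentially the same strategy as the paper: establish that the coordinate norms $\|e_{n}\|$ are constant along any convex subset of $\partial B_{E}$ via strict subadditivity, use near strict convexity of each $E_{n}$ to get precompactness of the projections, and then use absolute continuity of $\rho$ to control the tails. The only cosmetic difference is that the paper passes to the completions $\overline{E_{n}}$ and invokes Tychonoff compactness of $\prod_{n}\overline{D_{n}}$ (after showing the norm topology on this product is weaker than the product topology), whereas you build an $\varepsilon$-net directly by truncation---a slightly more elementary packaging of the same idea.
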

\begin{proof}
For $n\in\N$ let $\overline{E_{n}}$ be the completion of $E_{n}$. Let $\left(\tilde{E},\|\cdot\|\right)$ be a normed space, constructed from $\left\{\overline{E_{n}}\right\}_{n\in\N}$ analogously to construction of $E$. We leave it to the reader to verify that $E$ and $\tilde{E}$ are linear spaces, $\|\cdot\|$ is a norm on $\tilde{E}$, and $E$ is a subspace of $\tilde{E}$. Let us prove that $E$ is nearly strictly convex.

First, using arguments similar to the proof of \cite[Theorem 2.2.1]{istr}, one can show that if $\varnothing\ne D\subset\partial B_{E}$ is convex, and $D_{n}$ is the image of $D$ under the natural projection from $E$ onto $E_{n}$, then $D_{n}$ is a convex subset of a sphere in $E_{n}$. Let $r_{n}$ be the radius of that sphere. For any $e\in D$ we have that $\|e\|=\rho\left(\left\{r_{n}\right\}_{n\in\N}\right)=1$, and so for any $f\in \prod\limits_{n\in\N}r_{n}\partial B_{\overline{E_{n}}}$ we get $\|f\|=\rho\left(\left\{r_{n}\right\}_{n\in\N}\right)=1$. Hence, $B=\prod\limits_{n\in\N}r_{n}\partial B_{\overline{E_{n}}}\subset\partial B_{\tilde{E}}$. Let us show that the norm topology on $B$ is weaker than the product topology. Let $e=\left\{e_{n}\right\}_{n\in\N}\in B$ and let $\varepsilon>0$. Since $\rho\left(\left\{0,0,...,0,r_{n},r_{n+1},...\right\}\right)\to 0$, $n\to\8$, there is $m\in\N$ such that $\rho\left(\left\{0,0,...,0,r_{m},r_{m+1},...\right\}\right)<\frac{\varepsilon}{3}$. Let $c_{n}=\rho\left(\left\{0,...,0,1,0,...\right\}\right)<+\8$, where the $1$ is on the $n$-th position. Then, for $f=\left\{f_{n}\right\}_{n\in\N}\in B$ such that $\|e_{n}-f_{n}\|_{\overline{E_{n}}}<\frac{\varepsilon}{3m\max\left\{1,c_{1},...,c_{m}\right\}}$, for every $n\in\overline{1,m}$, we have
\begin{align*}
\|e-f\|&\le \sum\limits_{n=1}^{m}c_{n}\|e_{n}-f_{n}\|_{\overline{E_{n}}}+\|\left\{0,0,...,0,e_{m},e_{m+1},...\right\}\|+\|\left\{0,0,...,0,f_{m},f_{m+1},...\right\}\|  \\
&< \sum\limits_{n=1}^{m}c_{n}\frac{\varepsilon}{3m\max\left\{1,c_{1},...,c_{m}\right\}}+2\rho\left(\left\{0,0,...,0,r_{m},r_{m+1},...\right\}\right)\le \frac{\varepsilon}{3}+2\frac{\varepsilon}{3}=\varepsilon.
\end{align*}
Since $e$ and $\varepsilon$ were chosen arbitrarily, we conclude that $\|\cdot\|$ induces a topology on $B$ weaker than the product topology. For every $n\in\N$, since $E_{n}$ is nearly strictly convex, it follows that $D_{n}$ is precompact in $E_{n}$. Then the closure $\overline{D_{n}}$ of $D_{n}$ in $\overline{E_{n}}$ is compact. Let $D'= \prod\limits_{n\in\N}\overline{D_{n}}\subset B$, which is a compact set in the product topology, and so is compact in $B$. Since $D\subset D'$ we conclude that $D$ is relatively compact in $\tilde{E}$, and so precompact in $E$, and so $E$ is nearly strictly convex.
\end{proof}

Consider an example of a nearly strictly convex Banach space whose spheres contain infinite dimensional convex sets.

\begin{example}
Let $F=\left.\bigoplus\limits_{n\in\N}\right.^{2} l^{\8}_{2}$, be the $l^{2}$ direct sum of infinite number of copies of $\C^{2}$ with the $l^{\8}$ norm. By virtue of Proposition \ref{finco} this normed space is nearly strictly convex. Let $D_{n}=\left\{\frac{1}{n}\oplus t\left|t\in\left[-\frac{1}{n},\frac{1}{n}\right]\right.\right\}$ be a convex subset of a sphere in $l^{\8}_{2}$ of radius $\frac{1}{n}$.  From the proof of Proposition \ref{finco} it follows that $\prod\limits_{n\in\N}D_{n}$ is an infinite-dimensional convex subset of a sphere in $F$.
\qed\end{example}

Now consider an example of a non-strictly convex Banach space, such that the convex subsets of its unit sphere are at most one-dimensional.

\begin{example}
Let $H$ be a Hilbert space, and let $E=H\oplus_{1} \C$. Assume that $e,f\in H$ and $a,b\in \C$ are such that $\|e\|+\left|a\right|=\|f\|+\left|b\right|=\left\|\frac{e+f}{2}\right\|+\left|\frac{a+b}{2}\right|=1$. Without loss of generality we may assume that $e\ne 0_{H}$. Due to strict convexity of $H$ there are $\alpha,\beta\ge 0$ such that $f=\alpha e$ and $b=\beta a$ (or else $a=0$). Since we also have $\|f\|+\left|b\right|=1$, it follows that the convex subsets of the unit sphere that contain $e\oplus a$ are contained in $\left\{\alpha e\oplus \frac{1-\alpha\|e\|}{\left|a\right|}a\left|\alpha \in\left[0,\frac{1}{\|e\|}\right]\right.\right\}$, when $a\ne 0$, and $\left\{\left(1-\left|\gamma\right|\right) e\oplus \gamma ,~\left|\gamma\right|\le 1\right\}$, when $a=0$.

Note, that the dual $E^{*}=H\oplus_{\8} \C$ satisfies the conditions of Theorem \ref{bort}, and so right Birkhoff orthogonal complements are weakly closed in $E^{*}$.\qed\end{example}

\begin{remark}
It is clear that having a nearly strictly convex subset of finite co-dimension does not imply nearly strictly convexity. Indeed, even if $E$ is a Hilbert space, $E\oplus_{\8}\C$ is not nearly strictly convex. However, one can ask whether it is true that if $E$ is quasi-reflexive (i.e. such that $\dim E^{**}\slash E<+\8$) and nearly strictly convex, then $E^{**}$ is also nearly strictly convex.

Also, it is interesting whether nearly strict convexity of a normed space implies nearly strictly convexity of its completion. Furthermore, one can study a property stronger than nearly strictly convexity: instead of precompactness of closed convex subsets of the unit sphere we can demand compactness. Clearly, the two conditions are equivalent in the event when the normed space is complete.

Finally, one can ask whether it is true that if $E$ is nearly strictly convex, then there is a strictly convex subspace of $E$ of finite codimension.
\qed\end{remark}

\begin{bibsection}
\begin{biblist}
\bib{admv}{article}{
   author={Aleman, Alexandru},
   author={Duren, Peter},
   author={Mart\'{i}n, Mar\'{i}a J.},
   author={Vukoti\'{c}, Dragan},
   title={Multiplicative isometries and isometric zero-divisors},
   journal={Canad. J. Math.},
   volume={62},
   date={2010},
   number={5},
   pages={961--974},
}

\bib{ac1}{article}{
   author={Allen, Robert F.},
   author={Colonna, Flavia},
   title={Multiplication operators on the Bloch space of bounded homogeneous
   domains},
   journal={Comput. Methods Funct. Theory},
   volume={9},
   date={2009},
   number={2},
   pages={679--693},
}

\bib{ac2}{article}{
   author={Allen, Robert F.},
   author={Colonna, Flavia},
   title={Weighted composition operators on the Bloch space of a bounded
   homogeneous domain},
   conference={
      title={Topics in operator theory. Volume 1. Operators, matrices and
      analytic functions},
   },
   book={
      series={Oper. Theory Adv. Appl.},
      volume={202},
      publisher={Birkh\"{a}user Verlag, Basel},
   },
   date={2010},
   pages={11--37},
}

\bib{amw}{article}{
   author={Alonso, Javier},
   author={Martini, Horst},
   author={Wu, Senlin},
   title={On Birkhoff orthogonality and isosceles orthogonality in normed
   linear spaces},
   journal={Aequationes Math.},
   volume={83},
   date={2012},
   number={1-2},
   pages={153--189},
}

\bib{ae}{article}{
   author={Arens, Richard F.},
   author={Eells, James, Jr.},
   title={On embedding uniform and topological spaces},
   journal={Pacific J. Math.},
   volume={6},
   date={1956},
}

\bib{bartle}{article}{
   author={Bartle, Robert G.},
   title={On compactness in functional analysis},
   journal={Trans. Amer. Math. Soc.},
   volume={79},
   date={1955},
   pages={35--57},
}

\bib{erz}{article}{
   author={Bilokopytov, Eugene},
   title={Continuity and Holomorphicity of Symbols of Weighted Composition Operators},
   journal={to appear in Complex Analysis and Operator Theory},
   date={2018},
}
\bib{bn}{article}{
   author={Bourdon, Paul S.},
   author={Narayan, Sivaram K.},
   title={Normal weighted composition operators on the Hardy space $H^2(\Bbb
   U)$},
   journal={J. Math. Anal. Appl.},
   volume={367},
   date={2010},
   number={1},
   pages={278--286},
}

\bib{ds}{book}{
   author={Dunford, Nelson},
   author={Schwartz, Jacob T.},
   title={Linear Operators. I. General Theory},
   series={With the assistance of W. G. Bade and R. G. Bartle. Pure and
   Applied Mathematics, Vol. 7},
   publisher={Interscience Publishers, Inc., New York; Interscience
   Publishers, Ltd., London},
   date={1958},
   pages={xiv+858},
}

\bib{engelking}{book}{
    AUTHOR = {Engelking, Ryszard},
     TITLE = {General topology},
    SERIES = {Sigma Series in Pure Mathematics},
    VOLUME = {6},
   EDITION = {Second},
      NOTE = {Translated from the Polish by the author},
 PUBLISHER = {Heldermann Verlag, Berlin},
      YEAR = {1989},
     PAGES = {viii+529},
}
\bib{fhhmz}{book}{
   author={Fabian, Mari\'an},
   author={Habala, Petr},
   author={H\'ajek, Petr},
   author={Montesinos, Vicente},
   author={Zizler, V\'aclav},
   title={Banach space theory},
   series={CMS Books in Mathematics/Ouvrages de Math\'ematiques de la SMC},
   note={The basis for linear and nonlinear analysis},
   publisher={Springer, New York},
   date={2011},
   pages={xiv+820},
}

\bib{fm}{article}{
    AUTHOR = {Ferreira, J. C.},
    AUTHOR = {Menegatto, V. A.},
     TITLE = {Positive definiteness, reproducing kernel {H}ilbert spaces and
              beyond},
   JOURNAL = {Ann. Funct. Anal.},
  FJOURNAL = {Annals of Functional Analysis},
    VOLUME = {4},
      YEAR = {2013},
    NUMBER = {1},
     PAGES = {64--88},
}

\bib{fj}{book}{
    AUTHOR = {Fleming, Richard J.},
    AUTHOR = {Jamison, James E.},
     TITLE = {Isometries on {B}anach spaces: function spaces},
    SERIES = {Chapman \& Hall/CRC Monographs and Surveys in Pure and Applied
              Mathematics},
    VOLUME = {129},
 PUBLISHER = {Chapman \& Hall/CRC, Boca Raton, FL},
      YEAR = {2003},
     PAGES = {x+197},
}

\bib{floret}{book}{
   author={Floret, Klaus},
   title={Weakly compact sets},
   series={Lecture Notes in Mathematics},
   volume={801},
   note={Lectures held at S.U.N.Y., Buffalo, in Spring 1978},
   publisher={Springer, Berlin},
   date={1980},
   pages={vii+123},
}

\bib{gj}{article}{
    AUTHOR = {Garrido, M. Isabel},
    AUTHOR = {Jaramillo, Jes\'us A.},
     TITLE = {Variations on the {B}anach-{S}tone theorem},
      NOTE = {IV Course on Banach Spaces and Operators (Spanish) (Laredo,
              2001)},
   JOURNAL = {Extracta Math.},
  FJOURNAL = {Extracta Mathematicae},
    VOLUME = {17},
      YEAR = {2002},
    NUMBER = {3},
     PAGES = {351--383},
}

\bib{grot}{book}{
   author={Grothendieck, Alexander},
   title={Topological vector spaces},
   note={Translated from the French by Orlando Chaljub;
   Notes on Mathematics and its Applications},
   publisher={Gordon and Breach Science Publishers, New York-London-Paris},
   date={1973},
   pages={x+245},
}

\bib{istr}{book}{
   author={Istr\u{a}\c{t}escu, Vasile I.},
   title={Strict convexity and complex strict convexity},
   series={Lecture Notes in Pure and Applied Mathematics},
   volume={89},
   note={Theory and applications},
   publisher={Marcel Dekker, Inc., New York},
   date={1984},
   pages={x+312},
}

\bib{le1}{article}{
   author={Le, Trieu},
   title={Self-adjoint, unitary, and normal weighted composition operators
   in several variables},
   journal={J. Math. Anal. Appl.},
   volume={395},
   date={2012},
   number={2},
   pages={596--607},
}

\bib{le2}{article}{
   author={Le, Trieu},
   title={Normal and isometric weighted composition operators on the Fock
   space},
   journal={Bull. Lond. Math. Soc.},
   volume={46},
   date={2014},
   number={4},
   pages={847--856},
}

\bib{matache}{article}{
   author={Matache, Valentin},
   title={Isometric weighted composition operators},
   journal={New York J. Math.},
   volume={20},
   date={2014},
   pages={711--726},
}

\bib{ms}{book}{
   author={Maz'ya, Vladimir G.},
   author={Shaposhnikova, Tatyana O.},
   title={Theory of Sobolev multipliers},
   series={Grundlehren der Mathematischen Wissenschaften [Fundamental
   Principles of Mathematical Sciences]},
   volume={337},
   note={With applications to differential and integral operators},
   publisher={Springer-Verlag, Berlin},
   date={2009},
   pages={xiv+609},
}

\bib{ns}{article}{
   author={Nan, Chaoxun},
   author={Song, Shoubai},
   title={Nearly strict convexity and best approximation},
   language={English, with English and Chinese summaries},
   journal={J. Math. Res. Exposition},
   volume={17},
   date={1997},
   number={4},
   pages={479--488},
}

\bib{bn}{book}{
    AUTHOR = {Narici, Lawrence},
    AUTHOR = {Beckenstein, Edward},
     TITLE = {Topological vector spaces},
    SERIES = {Pure and Applied Mathematics (Boca Raton)},
    VOLUME = {296},
   EDITION = {Second},
 PUBLISHER = {CRC Press, Boca Raton, FL},
      YEAR = {2011},
     PAGES = {xviii+610},
}

\bib{scheidemann}{book}{
   author={Scheidemann, Volker},
   title={Introduction to complex analysis in several variables},
   publisher={Birkh\"auser Verlag, Basel},
   date={2005},
   pages={viii+171},
}

\bib{wada}{article}{
    AUTHOR = {Wada, Junzo},
     TITLE = {Weakly compact linear operators on function spaces},
   JOURNAL = {Osaka Math. J.},
    VOLUME = {13},
      YEAR = {1961},
     PAGES = {169--183},
}

\bib{vukotic}{article}{
   author={Vukoti\'{c}, Dragan},
   title={Pointwise multiplication operators between Bergman spaces on
   simply connected domains},
   journal={Indiana Univ. Math. J.},
   volume={48},
   date={1999},
   number={3},
   pages={793--803},
}

\bib{zhao}{article}{
   author={Zhao, Liankuo},
   title={Unitary weighted composition operators on the Fock space of
   $\Bbb{C}^n$},
   journal={Complex Anal. Oper. Theory},
   volume={8},
   date={2014},
   number={2},
   pages={581--590},
}

\bib{nz1}{article}{
    AUTHOR = {Zorboska, Nina},
     TITLE = {Unitary and Normal Weighted Composition Operators on Reproducing Kernel Hilbert Spaces of Holomorphic Functions},
   JOURNAL = {preprint},
      YEAR = {2017},
}

\bib{nz2}{article}{
   author={Zorboska, Nina},
   title={Isometric weighted composition operators on weighted Bergman
   spaces},
   journal={J. Math. Anal. Appl.},
   volume={461},
   date={2018},
   number={1},
   pages={657--675},
}

\end{biblist}
\end{bibsection}

\end{document}